\documentclass[twoside]{amsart}
\usepackage{mathrsfs}
\usepackage{amsfonts}
\usepackage{amssymb}
\usepackage{amssymb}
\usepackage{amssymb}
\usepackage{amssymb}
\usepackage{amssymb}
\usepackage{amssymb}
\usepackage{amssymb}
\usepackage{amssymb}
\usepackage{amsmath}
\usepackage{mathdots}
\usepackage{xcolor}
\usepackage[all]{xy}

\usepackage{lastpage}

\RequirePackage{amsmath} \RequirePackage{amssymb}
\usepackage{amscd,latexsym,amsthm,amsfonts,amssymb,amsmath,amsxtra}
\usepackage[colorlinks=true, urlcolor=blue, linkcolor=red,anchorcolor=blue,citecolor=blue]{hyper ref}


     
    \newcommand{\BC}{{\mathbb {C}}}

     \newcommand{\BZ}{{\mathbb {Z}}}


     \newcommand{\CB}{{\mathcal {B}}}
     
     \newcommand{\CF}{{\mathcal {F}}}
     
     \newcommand{\CJ}{{\mathcal {J}}}
     
    \newcommand{\CM}{{\mathcal {M}}} \newcommand{\CN}{{\mathcal {N}}}

    \newcommand{\CS}{{\mathcal {S}}} 
    \newcommand{\CU}{{\mathcal {U}}} 
    \newcommand{\CW}{{\mathcal {W}}}


     \newcommand{\fo}{{\mathfrak{o}}}  \newcommand{\fp}{{\mathfrak{p}}}


    \newcommand{\lenth}{{\mathrm {\lenth}}}

    \newcommand{\Hom}{{\mathrm{Hom}}} 
    \newcommand{\Ind}{{\mathrm{Ind}}}

    \renewcommand{\Re}{{\mathrm{Re}}}

    \newcommand{\wt}{\widetilde}

    \newcommand{\ov}{\overline}

    \theoremstyle{plain}

    \newtheorem{thm}{Theorem}[section] \newtheorem{corollary}[thm]{Corollary}
    \newtheorem{lemma}[thm]{Lemma}  \newtheorem{proposition}[thm]{Proposition}
     \newtheorem{definition}[thm]{Definition}


    \numberwithin{equation}{section}

\usepackage[top=1in, bottom=1in, left=1.25in, right=1.25in]{geometry}

\begin{document}
\title{Asai gamma factors over finite fields}

\begin{abstract}
In this note, we define and study Asai gamma factors over finite fields. We also prove some results about local Asai L-functions over p-adic fields for level zero representations.
\end{abstract}

	\author{Jingsong Chai}
	\address{School of Mathematics, Physics and Finance \\ Anhui Polytechnic University \\Wuhu, Anhui,  241000\\China}
	\email{jingsongchai@hotmail.com}

	\subjclass[2010]{22E50,20C33}
	\keywords{Asai gamma factors, level zero representations, Bessel functions}
	\thanks{The author is supported by a start up funding of AHPU}

	\maketitle
\section{Introduction}

In representation theory of connected linear algebraic group over local and finite fields, it is very useful and important to attach local invariants to irreducible representations, which encode various information about the representations in question. For example, this includes the work of Jacquet,Piatetski-Shapiro and Shalika (\cite{JPSS}) and Shahidi (\cite{Sh}).

Over finite fields, in her master thesis \cite{R}, Roditty-Gershon defined a finite field analog of the tensor gamma factor of Jacquet-Piatetski-Shapiro-Shlika. These analogs were then used by Nien to prove a finite field analogue of Jacquet's conjecture in \cite{N}. Later, Rongqing Ye in \cite{Y} showed that these tensor gamma factors are related to their local field counterpart through level zero supercuspidal representations.

This paper is inspired by the work of Ye \cite{Y}, and we consider the Asai gamma factors here. To be more precise, let $F=F_q$ be a finite field with $q$ elements, and $E/F$ is a quadratic extension so that $E=F_{q^2}$. Let $\pi$ be an irreducible cuspidal representation of $GL_n(E)$ with Whittaker model $\CW(\pi,\psi)$. Choose a function $\varphi$ on $F^n$, define the sum
\[
Z(W,\varphi;\psi)=\sum_{g\in N_n(F)\backslash GL_n(F)}W(g)\varphi(e_ng)
\]
where $e_n=(0,...,0,1)\in F^n$. This sum $Z(W,\varphi;\psi)$ admits a functional equation, which defines the Asai gamma factor $\gamma(\pi,\psi, As)$.
\begin{thm}
  Suppose that $\pi$ is irreducible cuspidal and not distinguished with respect to $GL_n(F)$. Then for any $\varphi\in \CS(F^n)$, and any $W\in \CW(\pi, \psi_E)$, we have
\[
\gamma(\pi,\psi, As)Z(W,\varphi; \psi)=Z(\widetilde{W}, \CF_\psi\varphi; \psi^{-1}).
\]
\end{thm}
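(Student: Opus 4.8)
The plan is to realize both sides of the asserted identity as $GL_n(F)$-invariant bilinear forms and then invoke a uniqueness principle. Write $B_1(W,\varphi)=Z(W,\varphi;\psi)$ and $B_2(W,\varphi)=Z(\widetilde W,\CF_\psi\varphi;\psi^{-1})$. First I would record the basic equivariance: letting $GL_n(F)$ act on $\CW(\pi,\psi)$ by right translation $\pi(h)$ and on $\CS(F^n)$ by $(R(h)\varphi)(x)=\varphi(xh)$, the substitution $g\mapsto gh$ in the defining sum gives at once
\[
B_1(\pi(h)W,R(h)\varphi)=B_1(W,\varphi),\qquad h\in GL_n(F).
\]
For $B_2$ one uses the transformation rules $(\pi(h)W)^\sim=\widetilde\pi({}^{t}h^{-1})\widetilde W$ and $\CF_\psi\circ R(h)=R({}^{t}h^{-1})\circ\CF_\psi$ (up to a harmless constant); feeding these into the same computation for $\widetilde\pi$ shows $B_2$ satisfies the identical invariance. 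Thus both $B_1$ and $B_2$ lie in $\Hom_{GL_n(F)}\!\big(\pi|_{GL_n(F)}\otimes\CS(F^n),\mathbf 1\big)$.

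Next I would cut this $\Hom$-space down to size. Since $g\in GL_n(F)$ is invertible, $e_ng$ runs over $F^n\setminus\{0\}$, so $B_1$ depends only on $\varphi|_{F^n\setminus\{0\}}$; in particular it is insensitive to $\varphi(0)$. Decompose $\CS(F^n)=\CS(F^n\setminus\{0\})\oplus\BC\,\delta_0$ as $GL_n(F)$-modules (the delta at the origin is fixed under right translation), and note $\CS(F^n\setminus\{0\})\cong\Ind_{P_n(F)}^{GL_n(F)}\mathbf 1$ because $GL_n(F)$ acts transitively on $F^n\setminus\{0\}$ with $\Stab(e_n)=P_n(F)$ the mirabolic subgroup. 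Frobenius reciprocity then yields
\[
\Hom_{GL_n(F)}\!\big(\pi\otimes\CS(F^n),\mathbf 1\big)\;\cong\;\Hom_{P_n(F)}\!\big(\pi,\mathbf 1\big)\;\oplus\;\Hom_{GL_n(F)}\!\big(\pi,\mathbf 1\big).
\]

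Here the non-distinction hypothesis enters decisively: it makes the second summand vanish, so every invariant form — in particular both $B_1$ and $B_2$ — is governed by $\Hom_{P_n(F)}(\pi|_{GL_n(F)},\mathbf 1)$ alone. (Concretely this forces the $\delta_0$-contribution to $B_2$, namely $\sum_{g}\widetilde W(g)$ since $\CF_\psi\delta_0\equiv 1$, to vanish, consistently with non-distinction of $\widetilde\pi$.) The heart of the matter is then the uniqueness statement
\[
\dim_{\BC}\Hom_{P_n(F)}\!\big(\pi|_{GL_n(F)},\mathbf 1\big)\le 1 ,
\]
which, together with the non-vanishing of $B_1$ (take $W$ with $W(1)\ne0$ and $\varphi$ the indicator of $e_n$, reducing $B_1$ to a nonzero mirabolic period of the Whittaker function), exhibits the space of invariant forms as exactly one-dimensional. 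Proportionality of $B_1$ and $B_2$ is then automatic, and one defines $\gamma(\pi,\psi,As)$ by $B_2=\gamma(\pi,\psi,As)\,B_1$, which is precisely the claimed functional equation.

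The main obstacle is the displayed uniqueness bound. The difficulty is that $\pi$ is a representation of $GL_n(E)$ while the relevant mirabolic $P_n(F)$ lives in the $F$-points, so one cannot cite the Bernstein--Zelevinsky theory for a single group verbatim; instead one must analyze the branching of $\pi$ from $GL_n(E)$ to $P_n(F)$. I would approach this through the Whittaker/Kirillov model of the cuspidal $\pi$ and the derivative filtration of its restriction to $P_n(F)$, reducing the $P_n(F)$-invariant functional to a Whittaker functional and then invoking uniqueness of the latter; cuspidality of $\pi$ guarantees that the filtration contributes a single relevant subquotient. This is the step where the finite-field combinatorics of the Asai situation genuinely differ from the Rankin--Selberg case of \cite{Y}, and it is where I would expect to spend the bulk of the effort.
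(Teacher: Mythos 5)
Your framework is the right one and in fact matches the paper's: both sides are $GL_n(F)$-invariant forms on $\pi\otimes\CS(F^n)$, the subspace $\CS_0(F^n)$ of functions vanishing at $0$ is the permutation module $\Ind_{P_n(F)}^{GL_n(F)}\mathbf{1}$, Frobenius reciprocity identifies $\Hom_{GL_n(F)}(\pi\otimes\CS_0(F^n),\mathbf{1})$ with $\Hom_{P_n(F)}(\pi,\mathbf{1})$, and non-distinction kills the $\delta_0$-summand. (The paper organizes this slightly differently: it first proves the functional equation on $\CS_0(F^n)$ for every cuspidal $\pi$, with no distinction hypothesis, and then extends to all of $\CS(F^n)$ using non-distinction; your merged version is equivalent.) The problem is that you have not proved the statement you yourself call the heart of the matter, namely $\dim\Hom_{P_n(F)}\bigl(\pi|_{P_n(F)},\mathbf{1}\bigr)\le 1$. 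You only gesture at a strategy and concede it is where the bulk of the effort would go. That multiplicity-one bound is precisely the nontrivial content of the theorem; without it your argument establishes nothing.

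Moreover, the route you sketch for it is doubtful. Bernstein--Zelevinsky/Gelfand--Kazhdan derivative theory controls the restriction of $\pi$ to the mirabolic of the \emph{same} group, $P_n(E)$; it does not apply verbatim to the cross-field restriction to $P_n(F)$, and cuspidality of $\pi$ over $E$ says nothing direct about a ``derivative filtration'' of $\pi|_{P_n(F)}$, which is a large, highly reducible representation of $P_n(F)$. The paper closes this gap by a different mechanism: cuspidality gives $\pi|_{P_n(E)}\cong\Ind_{N_n(E)}^{P_n(E)}\psi_E$, irreducible (Theorem 2.3 of \cite{Gel}), and then Proposition 4.3 of \cite{AM} --- in essence a Mackey double-coset analysis of $N_n(E)\backslash P_n(E)/P_n(F)$, using crucially that $\psi_E$ is trivial on $F$, so that the identity coset contributes exactly $\BC$ and all other cosets contribute $0$ --- yields $\Hom_{P_n(F)}\bigl(\Ind_{N_n(E)}^{P_n(E)}\psi_E,\mathbf{1}\bigr)\cong\BC$. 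A smaller but real defect: your non-vanishing argument for $B_1$ is incomplete, since choosing $W$ with $W(1)\neq 0$ does not make the mirabolic period $\sum_{p\in N_n(F)\backslash P_n(F)}W(p)$ nonzero. The paper gets non-vanishing by taking $W=\CB_{\pi,\psi_E}$ and invoking Gelfand's result (Proposition 4.9 of \cite{Gel}) that the Bessel function of a cuspidal representation vanishes on $P_n(E)\setminus N_n(E)$, so that this period equals $1$.
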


We next compute the absolute value of this $\gamma(\pi,\psi, As)$.

\begin{thm}
Let $\pi$ be an irreducible cuspidal representation of $GL_n(E)$.

(1). If $\pi$ is not distinguished with respect to $GL_n(F)$, then
\[
|\gamma(\pi,\psi,As )|=q^{\frac{n}{2}}.
\]

(2). If $\pi$ is distinguished with respect to $GL_n(F)$, then
\[
\gamma(\pi,\psi,As )=-1.
\]
\end{thm}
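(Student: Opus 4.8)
The plan is to treat the two cases by genuinely different mechanisms. For the \textbf{non-distinguished} case I would apply the functional equation of Theorem 1.1 twice and combine it with the unitarity of the cuspidal $\pi$; for the \textbf{distinguished} case, where that functional equation necessarily breaks down, I would instead evaluate the gamma factor directly against the $\GL_n(F)$-period functional.

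For part (1), I first apply Theorem 1.1 to $(\pi,\psi)$ and then again to $(\widetilde\pi,\psi^{-1})$, with $W$ replaced by $\widetilde W$ and $\varphi$ by $\CF_\psi\varphi$. Using Fourier inversion on $\CS(F^n)$ in the form $\CF_{\psi^{-1}}\CF_\psi\varphi(x)=q^n\varphi(-x)$ together with $\widetilde{\widetilde W}=W$, the two applications collapse to
\[
\gamma(\pi,\psi,As)\,\gamma(\widetilde\pi,\psi^{-1},As)=q^n,
\]
up to the value of the central character at $-1$, a scalar of modulus one arising from the reflection $\varphi(x)\mapsto\varphi(-x)$, which is harmless for the absolute value. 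Next, since a cuspidal representation of a finite $\GL_n$ is unitary, $\overline W\in\CW(\overline\pi,\psi^{-1})=\CW(\widetilde\pi,\psi^{-1})$ and $\overline{\CF_\psi\varphi}=\CF_{\psi^{-1}}\overline\varphi$; conjugating the functional equation for $(\pi,\psi)$ therefore matches it with the one for $(\widetilde\pi,\psi^{-1})$ and yields $|\gamma(\widetilde\pi,\psi^{-1},As)|=|\gamma(\pi,\psi,As)|$. Combining the two relations gives $|\gamma(\pi,\psi,As)|^2=q^n$, hence $q^{n/2}$.

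For part (2) I would first make explicit why Theorem 1.1 cannot hold. Let $A_W(u)=\sum_{g\in N_n(F)\backslash \GL_n(F),\,e_ng=u}W(g)$ for $u\in F^n\setminus\{0\}$, so that $Z(W,\varphi;\psi)=\sum_{u\neq0}\varphi(u)A_W(u)$ and the total mass $\sum_{u\neq0}A_W(u)=\CP(W)$ equals the period $\CP(W):=\sum_{g\in N_n(F)\backslash \GL_n(F)}W(g)$. The true content of the gamma factor is the proportionality $\gamma(\pi,\psi,As)\,A_W(u)=\widehat{A_{\widetilde W}}(u)$ for $u\neq0$, where $\widehat{A_{\widetilde W}}(u)=\sum_{v\neq0}A_{\widetilde W}(v)\,\psi(\langle u,v\rangle)$; this holds for every $\pi$ by multiplicity one, and it upgrades to the equality of Theorem 1.1 for all $\varphi\in\CS(F^n)$ exactly when the extra value $\widehat{A_{\widetilde W}}(0)=\CP(\widetilde W)$ vanishes, i.e.\ precisely when $\pi$ is not distinguished.

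To evaluate $\gamma$ when $\pi$ is distinguished I would sum the proportionality over $u\neq0$. Since $\sum_{u\in F^n}\psi(\langle u,v\rangle)=q^n\delta_{v,0}$, one has $\sum_{u\neq0}\psi(\langle u,v\rangle)=-1$ for every $v\neq0$, and therefore
\[
\gamma(\pi,\psi,As)\,\CP(W)=\sum_{u\neq0}\widehat{A_{\widetilde W}}(u)=-\sum_{v\neq0}A_{\widetilde W}(v)=-\,\CP(\widetilde W).
\]
It then remains to show that $\CP(\widetilde W)=\CP(W)$ and that this common value is nonzero. Both $W\mapsto\CP(W)$ (invariant under $g\mapsto gh$) and $W\mapsto\CP(\widetilde W)$ (invariant because $\widetilde{\pi(h)W}=\widetilde\pi({}^t h^{-1})\widetilde W$ with ${}^t h^{-1}\in\GL_n(F)$) are $\GL_n(F)$-invariant functionals on $\pi$, so by multiplicity one of distinction they are proportional, $\CP(\widetilde W)=c\,\CP(W)$, and a short computation using $\widetilde{\widetilde W}=W$ forces $c=1$; hence $\gamma(\pi,\psi,As)=-1$. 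The main obstacle is exactly this last paragraph: proving the nonvanishing of the distinction period (the finite-field analogue of the Flicker--Rallis period nonvanishing for distinguished cuspidal representations) and fixing the normalization $c=1$. Both rest on the multiplicity one of $\GL_n(F)$-distinction, which I would establish via Mackey theory or, in keeping with the methods of this paper, via the Bessel function of $\pi$ restricted to $\GL_n(F)$.
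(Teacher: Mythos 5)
Your part (1) is correct and is essentially the paper's own argument: apply the functional equation of Proposition \ref{GeneralFE} twice, once to $\pi$ and once to $\widetilde{\pi}$, and combine with $\overline{\gamma(\pi,\psi,As)}=\gamma(\widetilde{\pi},\psi^{-1},As)$, which the paper obtains from the Bessel expression and you obtain by conjugating the functional equation; both routes work. Two minor points: $\CF_{\psi^{-1}}\CF_{\psi}\varphi=q^{n}\varphi$ exactly, with no reflection $x\mapsto -x$ (that reflection occurs for $\CF_{\psi}\CF_{\psi}$, not $\CF_{\psi^{-1}}\CF_{\psi}$), so no central-character factor enters; and the second application requires $\widetilde{\pi}$ to be non-distinguished, which holds because distinction is preserved under taking contragredients (the paper is equally silent on this).

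In part (2) there is a genuine gap. The reduction itself is correct and is a clean repackaging of the paper's Bessel computation: applying Theorem \ref{AsaiGamma} to indicator functions of points gives $\gamma(\pi,\psi,As)A_{W}(u)=\widehat{A_{\widetilde{W}}}(u)$ for $u\neq 0$, and summing over $u\neq 0$ gives $\gamma(\pi,\psi,As)\,\CP(W)=-\CP(\widetilde{W})$. Your normalization step is flawed but repairable: iterating $\widetilde{\widetilde{W}}=W$ only yields $c_{\pi}c_{\widetilde{\pi}}=1$, since the two proportionality constants live on different representations, not $c_{\pi}=1$; in fact $\CP(\widetilde{W})=\CP(W)$ holds for \emph{every} $W$ for a trivial reason, namely $g\mapsto \omega_{n}{^t}g^{-1}$ descends to an involution of $N_{n}(F)\backslash H$ (because $\omega_{n}{^t}u^{-1}\omega_{n}\in N_{n}(F)$ for $u\in N_{n}(F)$ and $\psi_{E}$ is trivial on $N_{n}(F)$), so the two sums coincide by reindexing. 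The real gap is the nonvanishing of $\CP$, and this does not rest on multiplicity one of $\GL_{n}(F)$-distinction as you assert: multiplicity one bounds $\dim\Hom_{H}(\pi,1)$ by one, but it cannot tell you that the particular element $\CP$ of that line is nonzero. Concretely, $\CP\equiv 0$ exactly when the Whittaker function of the $H$-fixed vector vanishes identically on $H$; ruling this out is a test-vector problem, not a multiplicity statement, and neither Mackey theory nor uniqueness of Bessel functions supplies it. The paper's proof of Proposition \ref{AbsValue1} spends all of its effort precisely here: cuspidality gives $\dim\Hom_{P_{n}(F)}(\pi,1)=1$ by \cite{AM}, so together with Gow's $\dim\Hom_{H}(\pi,1)=1$ the two Hom spaces coincide; the mirabolic period $l(v)=\sum_{p\in N_{n}(F)\backslash P_{n}(F)}W_{v}(p)$ is nonzero because it equals $1$ on the Bessel vector (Gelfand's support theorem for $\CB_{\pi,\psi_{E}}$ restricted to $P_{n}$, \cite{Gel}); hence $l$ is $H$-invariant, and summing its translates over $P_{n}(F)\backslash H$ yields $\CP(\CB_{\pi,\psi_{E}})=q^{n}-1\neq 0$. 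Without this ingredient (or the paper's alternative second proof via level-zero representations and Kable's epsilon factor, Proposition \ref{GammaValue}), your final identity in part (2) reads $\gamma\cdot 0=-0$ and yields nothing.
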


We also relate $\gamma(\pi,\psi,As)$ to that of level zero representations, see Proposition \ref{Equality} for a precise statement.

\section{Notations and Preliminaries}

Let $L/K$ be a unramified quadratic extensions of p-adic fields. We denote by $\theta$ the nontrivial Galois element of this field extension. We choose $L/K$ so that the residue field of $K$ is a finite field $F=F_q$ with $q$ elements, and then the residue field of $L$ is $E=F_{q^2}$, which is a quadratic extension of $F$. Let $\fo_L, \fo_K$ be ring of integers of $L,K$, with maximal ideals $\fp_L, \fp_K$, respectively. Choose a uniformizer $\varpi$ for both $\fo_L$ and $\fo_K$, as $L/K$ is unramified. Use $|\cdot|_L$ and $|\cdot|_K$ to denote their respective absolute values on $L$ and $K$. They are normalized so that $|\varpi|_L=q^{-2}$ and $|\varpi|_K=q^{-1}$. We choose a nontrivial additive character $\psi_K$ of $K$, which has conductor precisely $\fp_K$. This character $\psi_K$ descends to a nontrivial character of $F=\fo_K/\fp_K$. We will use $\psi$ to denote both these two characters when there is no confusion. Choose some $z\in \fo_L^\times \backslash \fo_K^\times$ so that $z-\theta(z)\in \fo_L^\times$, and set
\[
\psi_L(x)=\psi(\frac{x-\theta(x)}{z-\theta(z)}).
\]
One then can check that $\psi_L$ has conductor precisely $\fp_L$, and it descends to a nontrivial character $\psi_E$ of $E=\fo_L/\fp_L$. It then follows that $\psi_L$ is trivial on $K$ and $\psi_E$ is trivial on $F$.

If $R$ is a commutative ring, we use $N_n(R)$ to denote the standard $n\times n$ unipotent matrices with coefficients in $R$. If $\psi_R:R\to \BC^\times$ is a nontrivial additive character of $R$, we extend it to $N_n(R)$ by
\[
\psi_R(u)=\psi_R(\sum_{i=1}^{n-1} u_{i,i+1}).
\]
for $u=(u_{ij})\in N_n(R)$, and still denote it by $\psi_R$.

Set $G=GL_n(E), H=GL_n(F)$. Let $\pi$ be an irreducible representation $\pi$ of $G$, we say $\pi$ is generic if $\Hom_G(\pi, Ind_{N_n(E)}^{G}\psi_E)\neq 0$. It is well known that the dimension of this $\Hom$ space is at most one, and when $\pi$ is generic, it is one. Then by Frobenius reciprocity law, $\dim \Hom_{N_n(E)}(\pi|_{N_n(E)}, \psi_E)=1$. Let $l\in \Hom_{N_n(E)}(\pi|_{N_n(E)}, \psi_E)$ be a nonzero Whittaker functional of $\pi$. Then for $v\in V_\pi$, define $W_v(g):=l(\pi(g)v)$, which is called the Whittaker function attached to the vector $v$. The space generated by all Whittaker functions $W_v(g)$ is called the Whittaker model $\CW(\pi,\psi_E)$ of $\pi$.

For a Whittaker function $W_v\in \CW(\pi,\psi_E)$, define a function $\wt{W}_v$ on $G$ by
\[
\wt{W}_v(g)=W_v(\omega_n ({^t}g^{-1}))
\]
where $\omega_n$ is the longest Weyl element of $G$, with 1's on the anti-diagonal and zero elsewhere. Then the function $\wt{W}_v(g)\in \CW(\wt{\pi},\psi_E^{-1})$, where $\wt{\pi}$ denotes the contragredient representation of $\pi$.

An irreducible representation $\pi$ of $G$ is called cuspidal if it has no $N_n(E)$-fixed vectors for any unipotent radical $N(E)$ of proper standard parabolic subgroups.

\begin{definition}
Let $\pi$ be an irreducible generic representation of $G$. We call a function $\CB_{\pi,\psi_E}$ a Bessel function with respect to $(\pi,\psi_E)$, if $\CB_{\pi,\psi_E}$ is a Whittaker function and satisfies
\[
\CB_{\pi,\psi_E}(u_1gu_2)=\psi_E(u_1u_2)\CB_{\pi,\psi_E}(g)
\]
for all $g\in G, u_1,u_2\in N_n(E)$. We will normalize $\CB_{\pi,\psi_E}$ so that it take value 1 at the identity.
\end{definition}

The existence and uniqueness of Bessel functions are guaranteed by Proposition 4.2 and 4.3 in \cite{Gel}. It also has the following explicit expression. Let $\chi_\pi$ be the character of $\pi$, then for all $g\in G$,
\[
\CB_{\pi,\psi_E}(g)=|N_n(E)|^{-1} \sum_{u\in N_n(E)} \psi_E^{-1}(u)\chi_\pi(gu).
\]

By Proposition 2.15 and 3.5 in \cite{N}, Bessel function satisfies the following identity
\[
\CB_{\pi,\psi_E}(g^{-1})=\overline{\CB_{\pi,\psi_E}(g) }=\CB_{\wt{\pi},\psi_E^{-1}}(g).
\]

Now we turn to p-adic field and recall the local Asai gamma factor defined by A.Kable in \cite{K}. We shall identify $K^n$ with the space of row vectors of length $n$. Consider a Schwartz function $\varphi$ on $K^n$, we define its Fourier transform as
\[
\CF_\psi\varphi(y)=\int_{K^n} \varphi(x)\psi(x{^t}y)dx.
\]

Let $\sigma$ be a smooth irreducible generic representation of $GL_n(L)$ with Whittaker model $\CW(\sigma, \psi_L)$ with respect to the character $\psi_L$. For $s\in \BC$, $\varphi$ a Schwartz function on $K^n$, and $W\in \CW(\sigma, \psi_L)$, we define the integral
\[
Z(W,\varphi,s)=\int_{N_n(K)\backslash GL_n(K)} W(g)\varphi(e_n g)|\det g|_K^s dg
\]
where $e_n=(0,...,0,1)$.
\begin{thm}{(A.Kable)}
\label{LocalAsai} (1). The integral $Z(W,\varphi, s)$ converges absolutely when $\Re(s)$ is sufficiently large. The subspace of $\BC(q^{-s})$ spanned by the local integrals $Z(W,\varphi,s)$ is a $\BC[q^s,q^{-s}]$-fractional ideal and contains $1$.

(2). There exists a function $\gamma(s,\sigma, \psi, As)\in \BC(q^{-s})$ such that
\[
\gamma(s,\sigma,\psi,As)Z(W,\varphi,s)=Z(\wt{W}, \CF_\psi\varphi, 1-s)
\]
for all Schwartz functions $\varphi$ and all Whittaker functions $W\in \CW(\sigma,\psi_L)$.
\end{thm}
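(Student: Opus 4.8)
The plan is to follow the now-standard template for local Rankin--Selberg zeta integrals developed by Jacquet--Piatetski-Shapiro--Shalika, adapted to the Asai (restriction-of-scalars) situation. Throughout I write $X=q^{-s}$ and use the Iwasawa decomposition $GL_n(K)=N_n(K)\,T_n(K)\,GL_n(\fo_K)$, where $T_n(K)$ is the diagonal torus. For the convergence assertion in (1), I would first reduce $Z(W,\varphi,s)$ to an integral over $T_n(K)$, the integral over the compact $GL_n(\fo_K)$ being harmless by smoothness of $W$ and $\varphi$. On the torus the restriction of $W\in\CW(\sigma,\psi_L)$ is controlled by the usual asymptotics: $W(\diag(t_1,\dots,t_n))$ is a finite function, supported where each $|t_i/t_{i+1}|_K$ is bounded above. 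The factor $\varphi(e_ng)$, with $\varphi$ Schwartz, confines $t_n$ to a compact set bounded away from $\infty$, while $|\det g|_K^{s}$ supplies decay as $\Re(s)\to+\infty$; combining these dominates the torus integral by a convergent sum of geometric series once $\Re(s)$ is large.

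For the remainder of (1), note that since $W$ is fixed by a compact-open subgroup and $\varphi$ is locally constant of compact support, the torus integral is a \emph{finite} $\BC$-linear combination of geometric series in the $|t_i|_K^{s}$, so each $Z(W,\varphi,s)$ lies in $\BC(X)$. Equivariance of the integral under translating $\varphi$ and twisting $W$ by the center then shows the span is stable under multiplication by $q^{s}$ and $q^{-s}$, hence is a $\BC[q^{s},q^{-s}]$-fractional ideal. To see it contains $1$, I would take $W$ with $W(1)=1$ supported on a small neighborhood of the identity in $N_n(K)\backslash GL_n(K)$ and let $\varphi$ be the characteristic function of a small neighborhood of $e_n$; only the cell near the identity then contributes, and the integral collapses to a nonzero constant independent of $s$, which can be normalized to $1$.

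For the functional equation in (2), applying the machinery above to the contragredient data $(\wt W,\CF_\psi\varphi)$ and the representation $\wt\sigma$ shows that $Z(\wt W,\CF_\psi\varphi,1-s)$ also lies in $\BC(X)$. The heart of the argument is a local uniqueness principle: viewing $(W,\varphi)\mapsto Z(W,\varphi,s)$ as a bilinear form on $\CW(\sigma,\psi_L)\times\CS(K^n)$, one checks it transforms under a suitable $GL_n(K)$-action (right translation on $W$, the action $R(h)\varphi(x)=\varphi(xh)$ on $\varphi$, and the twist $|\det h|_K^{s}$) by a fixed character, and that for $s$ in general position the space of such forms is at most one-dimensional. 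The Fourier transform together with the involution $W\mapsto\wt W$ intertwines the $s$-equivariance with the $(1-s)$-equivariance, so $Z(\wt W,\CF_\psi\varphi,1-s)$ defines a form in the \emph{same} one-dimensional space as $Z(W,\varphi,s)$. They are therefore proportional by a scalar $\gamma(s,\sigma,\psi,As)\in\BC(X)$ independent of $W$ and $\varphi$, which is exactly the claimed identity.

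The genuinely hard step is this local multiplicity-one statement, namely that the space of $GL_n(K)$-equivariant bilinear forms above is at most one-dimensional. Establishing it requires a Bruhat-type orbit analysis of the action of $GL_n(K)$ on the relevant homogeneous space — equivalently, a filtration of $\CS(K^n)$ by mirabolic orbits together with a vanishing statement for the resulting $\Hom$ spaces at the closed and the open cells. This orbit analysis is precisely the technical core, and it is the part carried out in detail by Kable in \cite{K}.
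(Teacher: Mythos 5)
The statement you are proving is never actually proved in this paper: it is Theorem~2.2, quoted with attribution to A.~Kable, and the paper's entire ``proof'' is the citation \cite{K}. So there is no internal argument to compare yours against. Judged on its own terms, your outline is a faithful reconstruction of how the result is in fact proved in \cite{K}, namely the Jacquet--Piatetski-Shapiro--Shalika template: Iwasawa decomposition plus Whittaker asymptotics for convergence; finiteness of the torus sum for rationality; a Kirillov-model choice of $(W,\varphi)$ to show the fractional ideal contains $1$; and the functional equation obtained by exhibiting both $Z(W,\varphi,s)$ and $Z(\wt{W},\CF_\psi\varphi,1-s)$ as elements of a space of $GL_n(K)$-equivariant bilinear forms on $\CW(\sigma,\psi_L)\times\CS(K^n)$ that is generically one-dimensional, with the Fourier transform intertwining the $s$- and $(1-s)$-equivariances.

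That said, your proposal is an outline rather than a proof, and you say so yourself: the generic multiplicity-one statement you defer to \cite{K} \emph{is} the mathematical content of part (2). Be aware that in the Asai setting this step is not just the mirabolic orbit filtration of $\CS(K^n)$ that suffices in the Rankin--Selberg case; the closed orbit (the delta function at $0$) produces a $\Hom_{GL_n(K)}(\sigma,|\det|_K^{\ast})$ term, and bounding that requires the uniqueness of $GL_n(K)$-invariant functionals on representations of $GL_n(L)$ restricted to $GL_n(K)$ --- i.e.\ the Gelfand-pair property of $(GL_n(L),GL_n(K))$ due to Flicker --- which is a genuinely deeper input than the open-cell analysis. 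Also note that uniqueness holds only for $q^{-s}$ outside a finite set, so one needs the rationality from part (1) to promote the resulting proportionality to an identity in $\BC(q^{-s})$; your phrase ``for $s$ in general position'' gestures at this, but the step should be made explicit. Finally, two small points specific to this setting: the integrand $W(g)\varphi(e_ng)|\det g|_K^s$ descends to $N_n(K)\backslash GL_n(K)$ only because $\psi_L$ is trivial on $K$, hence on $N_n(K)$ (this is exactly why the paper constructs $\psi_L(x)=\psi\bigl((x-\theta(x))/(z-\theta(z))\bigr)$); and your ``contains $1$'' argument presupposes that a Whittaker function with prescribed small support modulo $N_n(L)$ exists, which requires the fact that restriction to the mirabolic $P_n(L)$ realizes all of $\mathrm{ind}_{N_n(L)}^{P_n(L)}\psi_L$ inside $\CW(\sigma,\psi_L)$.
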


We can use this theorem to define local Asai L-factors and epsilon factors. Note that $\BC[q^{-s},q^s]$ is a PID and hence the fractional ideal is principal. Since this ideal contains $1$, we may choose a unique generator of the form $1/P_\sigma(q^{-s})$, where $P_\sigma(T)\in \BC[T]$ and $P(0)=1$. Then we define
\[
L(s,\sigma, As)=\frac{1}{P_\sigma(q^{-s})}.
\]
This is the local Asai L-factor, and the local epsilon factor is defined in the usual way as
\[
\varepsilon(s,\sigma,\psi, As)=\gamma(s,\sigma,\psi,As)\frac{L(s,\sigma,As)}{L(1-s, \wt{\sigma},As)}.
\]

\section{Asai gamma factors}
Assume $\pi$ is generic with Whittaker model $\CW(\pi,\psi_E)$. Let $\CS(F^n)$ be the space of functions $\varphi:F^n\to \BC$. For a function $\varphi\in \CS(F^n)$, we define its Fourier transform $\CF_\psi\varphi:F^n \to \BC$ by the formula
\[
\CF_\psi\varphi(y)=\sum_{x\in F^n}\varphi(x)\psi(<x,y>),
\]
where if $x=(x_1,...,x_n)\in F^n$ and $y=(y_1,...,y_n)\in F^n$, then $<x,y>$ is the standard pairing
\[
<x,y>=\sum_{i=1}^n x_iy_i.
\]
Take a Whittaker function $W\in \CW(\pi,\psi)$, and a function $\varphi$ on $F^n$. Consider the following sum
\[
Z(W,\varphi;\psi)=\sum_{g\in N_n(F)\backslash H}W(g)\varphi(e_ng).
\]
The following theorem establishes the functional equation for these sums and defines Asai gamma factor for cuspidal representations. Its proof is imitated from Theorem 2.3 in \cite{Y} with necessary modifications.

\begin{thm}
\label{AsaiGamma} If $\pi$ is an irreducible cuspidal representation of $G$, then there exists a nonzero constant $\gamma(\pi,\psi, As)$, such that for any $W\in \CW(\pi,\psi_E)$, and any $\varphi\in \CS(F^n)$ with $\varphi(0)=0$, we have
\[
\gamma(\pi,\psi, As)Z(W,\varphi; \psi)=Z(\widetilde{W}, \CF_\psi\varphi; \psi^{-1}).
\]
\end{thm}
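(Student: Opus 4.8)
The plan is to reduce the functional equation to a pointwise Fourier identity on $F^n\setminus\{0\}$ and then to settle that identity by a uniqueness (multiplicity-one) argument coming from cuspidality. First I would reorganize the sum according to the bottom row $e_ng$. Since $H=\GL_n(F)$ acts transitively on $F^n\setminus\{0\}$ with the mirabolic subgroup $P_n(F)=\{g\in H:\ e_ng=e_n\}$ as the stabilizer of $e_n$, the map $g\mapsto e_ng$ identifies $P_n(F)\backslash H$ with $F^n\setminus\{0\}$. Choosing $g_v\in H$ with $e_ng_v=v$ and setting
\[
\Lambda_W(v)=\sum_{u\in N_n(F)\backslash P_n(F)}W(ug_v),
\]
one checks $\Lambda_W$ is a well-defined function on $F^n\setminus\{0\}$ (here $\psi_E|_F=1$ makes the summand genuinely left $N_n(F)$-invariant) and that $Z(W,\varphi;\psi)=\sum_{v\neq 0}\varphi(v)\Lambda_W(v)$. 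The same bookkeeping applied to the right-hand side gives $Z(\widetilde W,\CF_\psi\varphi;\psi^{-1})=\sum_{v\neq 0}\CF_\psi\varphi(v)\,\Lambda_{\widetilde W}(v)$.

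Next I would move the Fourier transform off $\varphi$ and onto $\Lambda_{\widetilde W}$. Extending $\Lambda_{\widetilde W}$ by the value $0$ at the origin and using the self-duality $\sum_y\CF_\psi\varphi(y)h(y)=\sum_x\varphi(x)\CF_\psi h(x)$, the right-hand side becomes $\sum_x\varphi(x)\,\CF_\psi\Lambda_{\widetilde W}(x)$; the hypothesis $\varphi(0)=0$ is exactly what lets me drop the $x=0$ term and so ignore the irrelevant value of $\Lambda_{\widetilde W}$ at the origin. As $\varphi$ ranges over all functions supported on $F^n\setminus\{0\}$, the claimed functional equation is then equivalent to the pointwise identity
\[
\CF_\psi\Lambda_{\widetilde W}(x)=\gamma(\pi,\psi,As)\,\Lambda_W(x),\qquad x\neq 0,
\]
for a constant independent of $W$ and $x$.

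To prove this I would note that $W\mapsto\Lambda_W$ and $W\mapsto\CF_\psi\Lambda_{\widetilde W}$ are both $H$-equivariant maps from $\CW(\pi,\psi_E)$ into functions on $F^n\setminus\{0\}$: the first because $\Lambda_{W(\cdot h)}(v)=\Lambda_W(vh)$, and the second because the transpose–inverse twist introduced by $\CF_\psi$ (which intertwines right translation by $h$ with right translation by ${}^{t}h^{-1}$) is cancelled by the one coming from the contragredient operation $W\mapsto\widetilde W$, again using $\psi_E|_F=1$. By transitivity of $H$ on $F^n\setminus\{0\}$ it suffices to prove the identity at the single point $x=e_n$, i.e. to compare the two linear functionals $W\mapsto\CF_\psi\Lambda_{\widetilde W}(e_n)$ and $W\mapsto\Lambda_W(e_n)$ on $\CW(\pi,\psi_E)$. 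Both are equivariant under $\Stab_H(e_n)=P_n(F)$ for the same character, and cuspidality of $\pi$ forces the space of such functionals to be at most one-dimensional (a finite-field mirabolic/Kirillov uniqueness, parallel to uniqueness of the Whittaker–Bessel functional). This proportionality produces $\gamma(\pi,\psi,As)$; it is nonzero because $Z(W,\varphi;\psi)$ is not identically zero by genericity, while $\CF_\psi$ and $W\mapsto\widetilde W$ are bijections, so the right-hand side is a nonzero functional as well.

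The main obstacle is this last step: establishing the one-dimensionality of the space of $P_n(F)$-equivariant functionals on $\CW(\pi,\psi_E)$ and, hand in hand with it, checking that the Fourier and contragredient twists really cancel so that the two functionals transform identically. This is precisely where cuspidality is indispensable (and where, for the sharper statement in the introduction that removes $\varphi(0)=0$, non-distinguishedness must enter); by contrast the reduction in the first two paragraphs is formal manipulation with the mirabolic fibration and Fourier duality.
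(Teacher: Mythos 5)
Your proposal is, in substance, the paper's own proof unpacked pointwise. The mirabolic fibration $g\mapsto e_ng$, the fiber sums $\Lambda_W$, and the equivalence of the functional equation with the pointwise identity $\CF_\psi\Lambda_{\widetilde W}(x)=\gamma\,\Lambda_W(x)$ on $F^n\setminus\{0\}$ together implement the isomorphism $\CS_0(F^n)\cong \Ind_{P_n(F)}^{H}1$, which is exactly the Claim in the paper's proof ($\dim\Hom_H(\pi\otimes\CS_0(F^n),1)=\dim\Hom_{P_n(F)}(\pi,1)$, proved there with the indicator function $\phi_n$ of $e_n$); your equivariance computations, including the cancellation of the transpose--inverse twists against the contragredient, correspond to the paper's verification that the two bilinear forms $L_1$ and $L_2$ lie in $\Hom_H(\pi\otimes\CS_0(F^n),1)$. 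So the reduction is correct and is the same route, not a different one.

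Two points in your final step need attention. First, the statement you flag as the main obstacle, $\dim\Hom_{P_n(F)}(\pi,1)\leqslant 1$, is indeed the crux, and the paper settles it by citation: cuspidality gives $\pi|_{P_n(E)}\cong\Ind_{N_n(E)}^{P_n(E)}\psi_E$, irreducible (Theorem 2.3 in \cite{Gel}), and then $\Hom_{P_n(F)}(\Ind_{N_n(E)}^{P_n(E)}\psi_E,1)\cong\BC$ by Proposition 4.3 in \cite{AM}. Note that this second input is \emph{not} a ``mirabolic/Kirillov uniqueness parallel to Whittaker uniqueness'' as your parenthetical suggests: it is a distinction (period) statement for the mirabolic subgroup over the subfield $F$, proved by a Mackey analysis of the double cosets $N_n(E)\backslash P_n(E)/P_n(F)$, so mimicking the uniqueness argument for Whittaker functionals will not produce it. Second, your nonvanishing argument has a genuine flaw: $\CF_\psi$ does not preserve $\CS_0(F^n)$ (indeed $\CF_\psi\varphi(0)=\sum_x\varphi(x)$), so bijectivity of $\CF_\psi$ and of $W\mapsto\widetilde W$ does not by itself show that the right-hand functional is nonzero on the allowed test data. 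A correct patch inside your framework: if $W\mapsto\CF_\psi\Lambda_{\widetilde W}(e_n)$ vanished identically, then by $H$-equivariance and transitivity $\CF_\psi(\Lambda_{\widetilde W}\mathbf{1}_{\neq 0})$ would be supported at the origin for every $W$, forcing each $\Lambda_{\widetilde W}\mathbf{1}_{\neq 0}$ to be constant, hence zero; but choosing $\widetilde W$ whose restriction to $P_n(E)$ is supported on $N_n(E)$ (possible since $\widetilde\pi|_{P_n(E)}\cong\Ind_{N_n(E)}^{P_n(E)}\psi_E^{-1}$) gives $\Lambda_{\widetilde W}(e_n)=1\neq 0$, a contradiction.
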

\begin{proof}
Let $\CS_0(F^n)$ be the set of functions $\varphi$ on $F^n$ such that $\varphi(0)=0$. The group $H$ acts on $\CS_0(F^n)$ by right multiplication, i.e.,
\[
(R(g)\varphi)(x)=\varphi(xg)
\]
for $g\in H$ and $\varphi\in \CS_0(F^n)$. If we set
\[
L_1(W,\varphi)=\sum_{g\in N_n(F)\backslash H} W(g)\varphi(e_ng)
\]
and
\[
L_2(W,\varphi)=\sum_{g\in N_n(F)\backslash H} \widetilde{W}(g)\CF_\psi\varphi(e_ng),
\]
we can check that both $L_1$ and $L_2$ belong to the space $\Hom_H(\pi\otimes \CS_0(F^n), 1)$.  We are going to show that this space has dimension one. Set $P_n$ to be the mirabolic subgroup consisting of matrices with last row being $(0,...,0,1)$.

\textit{Claim}: $\dim \Hom_H(\pi\otimes \CS_0(F^n), 1)=\dim \Hom_{P_n(F)}(\pi,1)$.

We first prove this Claim. Let $(v,\varphi)$ be an $H$-invariant bilinear form on $\pi\otimes \CS_0(F^n)$. Let $\phi_n$ be the indicator function of $e_n$, and consider a linear form
\[
l(v)=(v,\phi_n)
\]
on $\pi$. Since $\phi_n$ is fixed by the $P_n(F)$ and $l(v)$ is a $P_n(F)$-invariant linear form on $\pi$, $l\in  \Hom_{P_n(F)}(\pi,1)$. This implies that $\dim \Hom_H(\pi\otimes \CS_0(F^n), 1)\leqslant \dim \Hom_{P_n(F)}(\pi,1)$. Conversely, for any $\varphi\in \CS_0(F^n)$, since $H$ acts transitively on $F^n\backslash \{0\}$, we can write
\[
\varphi=\sum_{g\in H/P_n(F)} c_g R(g)\phi_n
\]
for some $c_g\in \BC$. We can then define $(v,\varphi)$ on $\pi\otimes \CS_0(F^n)$ as
\[
(v,\varphi)=\sum_{g\in H/P_n(F)} c_gl(g^{-1}v).
\]
It can be checked that $(v,\varphi)$ is $H$-invariant. Hence $\dim \Hom_H(\pi\otimes \CS_0(F^n), 1)\geqslant \dim \Hom_{P_n(F)}(\pi,1)$, and the Claim is proved.

Therefore, it suffices to prove that $\dim \Hom_{P_n(F)}(\pi,1)=1$. Since $\pi$ is a cuspidal representation of $G$, we have $\pi|_{P_n(E)}=\Ind_{N_n(E)}^{P_n(E)}\psi_E$ is irreducible, for example, by Theorem 2.3 in \cite{Gel}. Then by Proposition 4.3 in \cite{AM}, we have
\[
\Hom_{P_n(F)}(\Ind_{N_n(E)}^{P_n(E)}\psi_E, 1)\cong \BC
\]
which finishes the proof.
\end{proof}

Similar to the Rankin-Selberg case, we can express $\gamma(\pi,\psi,As)$ in terms of Bessel functions.

\begin{proposition}
\label{Bessel} If $\pi$ is irreducible cuspidal, then
\[
\gamma(\pi,\psi,As)= \sum_{g\in N_n(F)\backslash H} \CB_{\pi,\psi_E}(g)\psi(<e_ng^{-1}, e_1>),
\]
where $e_1=(1,0,...,0)\in F^n$.
\end{proposition}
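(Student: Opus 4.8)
The plan is to feed a single, carefully chosen pair $(W,\varphi)$ into the functional equation of Theorem \ref{AsaiGamma} and read off $\gamma(\pi,\psi,As)$ directly. I would take $W=\CB_{\pi,\psi_E}$ and let $\varphi=\phi_n$ be the indicator function of $e_n$; since $e_n\neq 0$ we have $\phi_n(0)=0$, so $\phi_n\in\CS_0(F^n)$ and the theorem applies, giving
\[
\gamma(\pi,\psi,As)\,Z(\CB_{\pi,\psi_E},\phi_n;\psi)=Z(\widetilde{\CB_{\pi,\psi_E}},\CF_\psi\phi_n;\psi^{-1}).
\]
The strategy is to show that the left zeta sum equals $1$ and that the right zeta sum equals the asserted Bessel expression; dividing then yields the formula.

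For the right-hand side I first compute $\CF_\psi\phi_n(y)=\sum_{x}\phi_n(x)\psi(\langle x,y\rangle)=\psi(\langle e_n,y\rangle)=\psi(y_n)$, so that $\CF_\psi\phi_n(e_ng)=\psi(g_{n,n})$, and the right sum becomes $\sum_{g\in N_n(F)\backslash H}\widetilde{\CB_{\pi,\psi_E}}(g)\psi(g_{n,n})$. Using $\widetilde{\CB_{\pi,\psi_E}}(g)=\CB_{\pi,\psi_E}(\omega_n\,{}^{t}g^{-1})$ I would substitute $h=\omega_n\,{}^{t}g^{-1}$, a bijection of $H$; since $\omega_n^2=1$ one gets $g=\omega_n\,{}^{t}(h^{-1})$ and then $g_{n,n}=(h^{-1})_{n,1}=\langle e_nh^{-1},e_1\rangle$. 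Both the original summand in $g$ and the transformed summand $f(h):=\CB_{\pi,\psi_E}(h)\psi(\langle e_nh^{-1},e_1\rangle)$ are left $N_n(F)$-invariant: for the latter this uses that $\psi_E$ is trivial on $N_n(F)$ and that left multiplication of $h$ by $u\in N_n(F)$ multiplies $h^{-1}$ on the right by $u^{-1}$, which fixes the first column and hence the pairing with $e_1$. Thus, after summing over all of $H$ and dividing by $|N_n(F)|$, the change of variables turns the coset sum in $g$ into $\sum_{h\in N_n(F)\backslash H}\CB_{\pi,\psi_E}(h)\psi(\langle e_nh^{-1},e_1\rangle)$, which is exactly the claimed right-hand side.

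It remains to evaluate the left zeta sum. Because $\phi_n$ is the indicator of $e_n$, the factor $\phi_n(e_ng)$ forces the last row of $g$ to be $e_n$, i.e.\ $g\in P_n(F)$, so $Z(\CB_{\pi,\psi_E},\phi_n;\psi)=\sum_{g\in N_n(F)\backslash P_n(F)}\CB_{\pi,\psi_E}(g)$. The key input is that $\CB_{\pi,\psi_E}|_{P_n(E)}$ is supported on $N_n(E)$, with value $\psi_E$ there. Indeed, by the Bessel property this restriction is a bi-$(N_n(E),\psi_E)$-equivariant function on $P_n(E)$, and any such function vanishes on a double coset $N_n x N_n$ admitting a relation $u_1xu_2=x$ with $\psi_E(u_1u_2)\neq 1$; a direct computation with the Bruhat decomposition of the mirabolic $P_n$ (passing to $P_n/U_n\cong GL_{n-1}$, so that $N_n\backslash P_n/N_n\cong N_{n-1}\backslash GL_{n-1}/N_{n-1}$, and then recursing over the simple roots) shows the identity coset is the only relevant one. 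Hence $\CB_{\pi,\psi_E}|_{P_n(E)}=\psi_E\cdot\mathbf{1}_{N_n(E)}$. Restricting to $P_n(F)$ and using once more that $\psi_E$ is trivial on $N_n(F)$, each nontrivial coset in $N_n(F)\backslash P_n(F)$ contributes $0$ while the identity coset contributes $\CB_{\pi,\psi_E}(1)=1$, so the left sum equals $1$.

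I expect this last step — pinning down that the Bessel function restricted to the mirabolic is supported on $N_n$ — to be the main obstacle, since it is precisely where the combinatorics of the character $\psi_E$ enter and where cuspidality (through genericity and the existence/uniqueness of $\CB_{\pi,\psi_E}$) is used; it can alternatively be quoted from the Bessel-function support results of Nien \cite{N} and Gelfand \cite{Gel}.
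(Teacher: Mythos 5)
Your proposal is correct and takes essentially the same approach as the paper: feed $(W,\varphi)=(\CB_{\pi,\psi_E},\phi_n)$ into the functional equation of Theorem \ref{AsaiGamma}, reduce the left-hand sum to $1$ using the fact that the Bessel function restricted to the mirabolic is supported on $N_n$ (which the paper simply cites from Proposition 4.9 of \cite{Gel}), and convert the right-hand sum into the stated Bessel expression via the substitution $g\mapsto \omega_n\,{}^{t}g^{-1}$. Your added care about left $N_n(F)$-invariance in the change of variables, and your sketch of the support lemma, are just more explicit versions of steps the paper leaves implicit.
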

\begin{proof}
We choose $\varphi$ to be $\phi_n$, the indicator function of $e_n$. Then one computes the Fourier transform
\[
\CF_\psi \phi_n(y)=\sum_{x\in F^n} \phi_n(x)\psi(<y,x>)=\psi(<y,e_n>)=\psi(y_n)
\]
if we write $y=(y_1,...,y_n)$.

In the functional equation
\[
\gamma(\pi,\psi, As)Z(W,\varphi; \psi)=Z(\widetilde{W}, \CF_\psi\varphi; \psi^{-1}),
\]
we have
\[
Z(\CB_{\pi,\psi_E},\varphi; \psi)=\sum_{g\in N_n(F)\backslash H} \CB_{\pi,\psi_E}(g)\phi_n(g)=\sum_{g\in N_n(F)\backslash P_n(F)} \CB_{\pi,\psi_E}(g).
\]
By Proposition 4.9 and its corollary in \cite{Gel}, when restricted to $P_n(F)$, $\CB_{\pi,\psi}(g)=0$ unless $g\in N_n(F)$, and thus this sum reduces to $1$.

For the right hand side, we have
\[
Z(\widetilde{\CB}_{\pi,\psi_E}, \CF_\psi\varphi; \psi^{-1})=\sum_{g\in N_n(F)\backslash H} \CB_{\pi,\psi_E}(g)\CF_\psi\phi_n(e_1 {^t}g^{-1})=\sum_{g\in N_n(F)\backslash H} \CB_{\pi,\psi_E}(g) \psi(g_{n1})
\]
if we write $g^{-1}=(g_{ij})$.

Direct computation shows that $\psi(<e_ng^{-1}, e_1>)=\psi(g_{n1})$ and the proposition follows.
\end{proof}

We note that
\begin{eqnarray*}
\ov{\gamma(\pi,\psi,As) }
&=& \sum_{g\in N_n(F)\backslash H }\ov{\CB_{\pi,\psi_E}(g)}\cdot \ov{\psi(<e_ng^{-1}, e_1>) }  \\
&=& \sum_{g\in N_n(F)\backslash H } \CB_{\wt{\pi},\psi_E^{-1}}(g) \psi^{-1}(<e_ng^{-1}, e_1>)  \\
&=& \gamma(\wt{\pi}, \psi^{-1}, As).
\end{eqnarray*}

We say $\pi$ is distinguished with respect to $H$ if $\Hom_H(\pi,1)\neq 0$. The next result extends the above functional equation to general functions $\varphi$ when $\pi$ is not distinguished with respect to $H$.

\begin{proposition}
\label{GeneralFE} Suppose that $\pi$ is irreducible cuspidal and not distinguished with respect to $H$. Then for any $\varphi\in \CS(F^n)$, and any $W\in \CW(\pi, \psi_E)$, we have
\[
\gamma(\pi,\psi, As)Z(W,\varphi; \psi)=Z(\widetilde{W}, \CF_\psi\varphi; \psi^{-1}).
\]
\end{proposition}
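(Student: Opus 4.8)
The plan is to reduce the general statement to the case already treated in Theorem \ref{AsaiGamma} together with a single extra test function, and then to show both sides vanish on that extra function. Write $\phi_0$ for the characteristic function of $0\in F^n$. Every $\varphi\in\CS(F^n)$ decomposes as
\[
\varphi=\bigl(\varphi-\varphi(0)\phi_0\bigr)+\varphi(0)\phi_0,
\]
where the first summand lies in $\CS_0(F^n)$. Both sides of the desired identity are linear in $\varphi$ (the right-hand side because $\CF_\psi$ is linear and $Z(\wt W,-;\psi^{-1})$ is linear in its second slot), and Theorem \ref{AsaiGamma} already gives the identity on $\CS_0(F^n)$. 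Hence it suffices to verify the functional equation for the single function $\varphi=\phi_0$.

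For $\varphi=\phi_0$ I would compute both sides directly. Since $e_ng$ is the last row of $g\in H$, it is never zero, so $\phi_0(e_ng)=0$ for every $g$, giving $Z(W,\phi_0;\psi)=0$; thus the left-hand side vanishes. For the right-hand side, a one-line computation gives $\CF_\psi\phi_0(y)=\sum_{x}\phi_0(x)\psi(\langle x,y\rangle)=1$, the constant function $\mathbf 1$, so that
\[
Z(\wt W,\CF_\psi\phi_0;\psi^{-1})=\sum_{g\in N_n(F)\backslash H}\wt W(g).
\]
The whole matter therefore reduces to showing that this last sum vanishes for every $W$.

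To do this I would interpret the sum as a linear functional. Define $\lambda\colon \CW(\wt\pi,\psi_E^{-1})\to\BC$ by $\lambda(W')=\sum_{g\in N_n(F)\backslash H}W'(g)$. This is well defined because $\psi_E$ is trivial on $F$, so $W'(ug)=W'(g)$ for $u\in N_n(F)$ and the summand depends only on the coset $N_n(F)g$. Replacing $g$ by $gh$ for $h\in H$ merely permutes the cosets in $N_n(F)\backslash H$, which shows $\lambda(\wt\pi(h)W')=\lambda(W')$; hence $\lambda\in\Hom_H(\wt\pi,1)$.

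Finally I would invoke the non-distinction hypothesis, after transporting it to $\wt\pi$. Because $H$ is finite and the representation category is semisimple, $\dim\Hom_H(\pi,1)$ and $\dim\Hom_H(\wt\pi,1)$ both compute the multiplicity of the trivial representation of $H$; indeed the character identity $\chi_{\wt\pi}(h)=\overline{\chi_\pi(h)}=\chi_\pi(h^{-1})$ together with the substitution $h\mapsto h^{-1}$ in the averaging over $H$ shows these dimensions coincide. Thus $\pi$ is distinguished with respect to $H$ if and only if $\wt\pi$ is. Since $\pi$ is not distinguished by hypothesis, $\Hom_H(\wt\pi,1)=0$, forcing $\lambda=0$ and hence $\sum_{g}\wt W(g)=0$ for all $W$. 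Both sides of the functional equation then vanish at $\varphi=\phi_0$, completing the reduction. The only step with genuine content beyond Theorem \ref{AsaiGamma} is the equivalence ``$\pi$ distinguished $\Leftrightarrow$ $\wt\pi$ distinguished,'' which is precisely where the hypothesis on $\pi$ gets converted into the vanishing of $\lambda$; I expect this to be the main point requiring care.
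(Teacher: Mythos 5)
Your proof is correct, but it handles the residual term dually to the paper, and this costs you one extra lemma. The paper writes $\varphi=\varphi_0+\varphi_1$ with $\varphi_1=\varphi(0)$ the \emph{constant} function and $\varphi_0=\varphi-\varphi(0)\in\CS_0(F^n)$; then the representation-theoretic input is used on the left-hand side, since $Z(W,\varphi_1;\psi)=\varphi(0)\sum_{g}W(g)$ vanishes because $\pi$ itself is not distinguished, while on the right-hand side $\CF_\psi\varphi_1=q^n\varphi(0)\delta_0$ and $Z(\wt{W},\delta_0;\psi^{-1})=0$ trivially because $e_ng\neq 0$. You instead split off $\varphi(0)\phi_0$ (the delta at $0$), so the trivial vanishing occurs on the left, and the representation-theoretic vanishing is pushed to the right-hand side against $\wt{W}\in\CW(\wt{\pi},\psi_E^{-1})$ --- which is why you must know that $\wt{\pi}$, not just $\pi$, fails to be distinguished. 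Your argument for that equivalence is correct: the multiplicity of the trivial representation of $H$ in $\pi$ is $\frac{1}{|H|}\sum_{h\in H}\chi_\pi(h)$, and $\chi_{\wt{\pi}}(h)=\chi_\pi(h^{-1})$ together with the substitution $h\mapsto h^{-1}$ shows the two multiplicities agree, so $\Hom_H(\wt{\pi},1)=0$ and your functional $\lambda$ vanishes. In short: the paper's choice of decomposition applies the hypothesis to $\pi$ directly and needs nothing further, whereas your (equally valid) choice requires the auxiliary fact that distinction of $\pi$ and of $\wt{\pi}$ are equivalent; the benefit is that your route makes this standard and useful equivalence explicit rather than silently avoiding it.
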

\begin{proof}
Set $\delta_0$ to be the indicator function of $0\in F^n$. First we note that
\[
Z(W,\delta_0; \psi)=\sum_{g\in N_n(F)\backslash H}W(g)\delta_0(e_ng)=0.
\]
Secondly, use $1$ to denote the constant function, then
\[
Z(W,1; \psi)=\sum_{g\in N_n(F)\backslash H}W(g)=0
\]
since this is an $H$-invariant linear form on $\pi$ and $\pi$ is not distinguished with respect to $H$.

Now for general $\varphi\in \CS(F^n)$, write $\varphi=\varphi_0 + \varphi_1$, where $\varphi_0=\varphi-\varphi(0)$ and $\varphi_1=\varphi(0)$. Then $\varphi_0(0)=0$ and one computes $\CF_\psi \varphi_1=q^n \varphi(0)\delta_0$. Apply the functional equation in Theorem \ref{AsaiGamma} to $\varphi_0$, and note that both $Z$ and $\CF_\psi$ are linear in $\varphi$, we find
\[
\gamma(\pi,\psi, As)(Z(W,\varphi; \psi)-Z(W, \varphi_1; \psi) )= Z(\widetilde{W}, \CF_\psi\varphi; \psi^{-1} )-Z(\widetilde{W}, \CF_\psi\varphi_1; \psi^{-1} ).
\]
By what we discussed above we find
\[
\gamma(\pi,\psi, As)Z(W,\varphi; \psi)=Z(\widetilde{W}, \CF_\psi\varphi; \psi^{-1}).
\]
\end{proof}

As a corollary, we find the absolute value of $\gamma(\pi,\psi, As)$.

\begin{corollary}
\label{AbsValue} Let $\pi$ be irreducible cuspidal and it is not distinguished with respect to $H$. Then
\[
\gamma(\pi,\psi, As)\gamma(\wt{\pi}, \psi^{-1},As )=q^n
\]
and therefore
\[
|\gamma(\pi,\psi,As )|=q^{\frac{n}{2}}.
\]
\end{corollary}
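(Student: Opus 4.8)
The plan is to apply the extended functional equation of Proposition \ref{GeneralFE} twice—once to $\pi$ and once to its contragredient $\wt\pi$—and then feed the result into the relation $\ov{\gamma(\pi,\psi,As)}=\gamma(\wt\pi,\psi^{-1},As)$ recorded just above. Before doing so I would record two preliminary facts. First, $\wt\pi$ is again irreducible cuspidal, and it is not distinguished with respect to $H$: since finite-group representations over $\BC$ are semisimple, $\Hom_H(\wt\pi,1)\neq 0$ exactly when the trivial representation occurs in $\wt\pi|_H$, which happens exactly when $\Hom_H(\pi,1)\neq 0$; hence $\pi$ is non-distinguished iff $\wt\pi$ is, and Proposition \ref{GeneralFE} applies verbatim to $(\wt\pi,\psi^{-1})$. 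Second, a one-line orthogonality computation gives Fourier inversion in the form
\[
\CF_{\psi^{-1}}\CF_\psi\varphi(z)=\sum_{x\in F^n}\varphi(x)\sum_{y\in F^n}\psi(<x-z,y>)=q^n\varphi(z),
\]
so that $\CF_{\psi^{-1}}\CF_\psi=q^n\cdot\id$ on $\CS(F^n)$.

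Next I would fix $W\in\CW(\pi,\psi_E)$ and $\varphi\in\CS(F^n)$ and chain the two functional equations. Proposition \ref{GeneralFE} for $\pi$ gives
\[
\gamma(\pi,\psi,As)Z(W,\varphi;\psi)=Z(\wt W,\CF_\psi\varphi;\psi^{-1}),
\]
while Proposition \ref{GeneralFE} for $(\wt\pi,\psi^{-1})$, applied to the Whittaker function $\wt W\in\CW(\wt\pi,\psi_E^{-1})$ and to the function $\CF_\psi\varphi$, gives
\[
\gamma(\wt\pi,\psi^{-1},As)Z(\wt W,\CF_\psi\varphi;\psi^{-1})=Z(\wt{\wt W},\CF_{\psi^{-1}}\CF_\psi\varphi;\psi).
\]
Using $\wt{\wt W}=W$ and the inversion formula, the right-hand side becomes $q^nZ(W,\varphi;\psi)$, and substituting the first equation into the second yields
\[
\gamma(\pi,\psi,As)\gamma(\wt\pi,\psi^{-1},As)Z(W,\varphi;\psi)=q^nZ(W,\varphi;\psi).
\]

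To finish I would exhibit one pair $(W,\varphi)$ for which $Z(W,\varphi;\psi)\neq 0$: the computation inside the proof of Proposition \ref{Bessel}, with $W=\CB_{\pi,\psi_E}$ and $\varphi=\phi_n$ the indicator function of $e_n$, already shows $Z(\CB_{\pi,\psi_E},\phi_n;\psi)=1$. Cancelling this nonzero factor gives $\gamma(\pi,\psi,As)\gamma(\wt\pi,\psi^{-1},As)=q^n$, and replacing $\gamma(\wt\pi,\psi^{-1},As)$ by $\ov{\gamma(\pi,\psi,As)}$ turns the left side into $|\gamma(\pi,\psi,As)|^2$, so $|\gamma(\pi,\psi,As)|=q^{n/2}$. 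I expect the only genuinely delicate point to be the bookkeeping of characters in the second application—ensuring the Fourier transform attached to $(\wt\pi,\psi^{-1})$ is $\CF_{\psi^{-1}}$ and not $\CF_\psi$, since it is precisely this choice that produces the inversion constant $q^n$—together with the preservation of non-distinction under contragredient; everything else is formal.
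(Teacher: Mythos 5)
Your proof is correct and follows essentially the same route as the paper: apply the functional equation of Proposition \ref{GeneralFE} to $\pi$ and then to $(\wt{\pi},\psi^{-1})$, and use Fourier inversion $\CF_{\psi^{-1}}\CF_\psi\varphi=q^n\varphi$. The details you add---that $\wt{\pi}$ is again cuspidal and non-distinguished, that $Z(\CB_{\pi,\psi_E},\phi_n;\psi)=1$ supplies a nonzero value to cancel, and the identity $\ov{\gamma(\pi,\psi,As)}=\gamma(\wt{\pi},\psi^{-1},As)$ converting the product into $|\gamma(\pi,\psi,As)|^2$---are precisely the points the paper's terse proof leaves implicit.
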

\begin{proof}
By the functional equation in Proposition \ref{GeneralFE},
\[
\gamma(\pi,\psi, As)Z(W,\varphi; \psi)=Z(\widetilde{W}, \CF_\psi\varphi; \psi^{-1}).
\]
Apply this to $\wt{\pi}$ again, we have
\[
\gamma(\wt{\pi},\psi^{-1},As )Z(\widetilde{W}, \CF_\psi\varphi; \psi^{-1})=Z(\wt{\wt{W} },\CF_{\psi^{-1}}\CF_{\psi}\varphi, \psi ).
\]
Now the corollary follows since $\CF_{\psi^{-1}}\CF_{\psi}\varphi=q^n\varphi$.
\end{proof}

For the case when $\pi$ is distinguished, we will use method in \cite{SZ} to compute $\gamma(\pi,\psi,As)$. In the next section, another method similar to that in \cite{Y} will be present. Before that, we first need the following lemma which characterizes the distinction of $\pi$ in terms of Bessel function $\CB_{\pi,\psi}$. First note that the Galois involution $\theta$ induces an involution of $G$ by componentwise, still denoted as $\theta$. Define $\tau:G\to G$ by
\[
\tau(g)={^t}(\theta(g))^{-1}.
\]
We also note that by the definition,
\[
\psi_E(\theta(u))=\psi_E(u^{-1})
\]
for all $u\in N_n(E)$.

\begin{lemma}
\label{BesselDistinction} Let $\pi$ be an irreducible cuspidal representation of $G$ with Bessel function $\CB_{\pi,\psi_E}$. Then $\pi$ is distinguished with respect to $H$ if and only if $\CB_{\pi,\psi_E}(\theta(g)^{-1})=\CB_{\pi,\psi_E}(g)$. In particular, if $\pi$ is distinguished, then for all $g\in H$, we have $\CB_{\pi,\psi_E}(g^{-1})=\CB_{\pi,\psi_E}(g)$.
\end{lemma}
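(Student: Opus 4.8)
The plan is to read the asserted identity $\CB_{\pi,\psi_E}(\theta(g)^{-1})=\CB_{\pi,\psi_E}(g)$ as the equality of the Bessel functions of two representations, and then to combine two inputs: that the Bessel function determines an irreducible generic representation, and that $H$-distinction of a cuspidal $\pi$ is equivalent to the Galois-conjugate self-duality $\pi\cong\wt{\pi}^\theta$. Here $\pi^\theta$ denotes the Galois twist $\pi^\theta(g)=\pi(\theta(g))$, and $\wt{\pi}^\theta=\wt{\pi^\theta}$ its contragredient (these two twists commute).

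The first step is pure bookkeeping to identify the function $g\mapsto\CB_{\pi,\psi_E}(\theta(g)^{-1})$. Since $\psi_E(\theta(u))=\psi_E(u^{-1})$, the same Whittaker functional shows that $\pi^\theta$ is $\psi_E^{-1}$-generic, and substituting $v=\theta(u)$ in the character formula $\CB_{\rho,\psi'}(g)=|N_n(E)|^{-1}\sum_u\psi'^{-1}(u)\chi_\rho(gu)$ gives $\CB_{\pi^\theta,\psi_E^{-1}}(g)=\CB_{\pi,\psi_E}(\theta(g))$. Replacing $g$ by $g^{-1}$ and then applying the excerpt's identity $\CB_{\rho,\psi'}(g^{-1})=\CB_{\wt{\rho},\psi'^{-1}}(g)$ with $\rho=\pi^\theta$ and $\psi'=\psi_E^{-1}$ (so $\psi'^{-1}=\psi_E$) yields, using $\wt{\pi^\theta}=\wt{\pi}^\theta$,
\[
\CB_{\pi,\psi_E}(\theta(g)^{-1})=\CB_{\wt{\pi}^\theta,\psi_E}(g)\qquad\text{for all }g\in G.
\]
Thus the claimed identity is exactly the statement $\CB_{\wt{\pi}^\theta,\psi_E}=\CB_{\pi,\psi_E}$.

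With this reduction in hand, both directions follow quickly. Isomorphic representations share a character and hence a Bessel function, so $\pi\cong\wt{\pi}^\theta$ forces $\CB_{\wt{\pi}^\theta,\psi_E}=\CB_{\pi,\psi_E}$; conversely, because distinct irreducible generic representations have distinct Bessel functions (these functions are orthogonal, cf.\ the identities in \cite{N}), the equality of the two Bessel functions forces $\pi\cong\wt{\pi}^\theta$. The proof then closes by feeding in the distinction criterion: for irreducible cuspidal $\pi$, $\Hom_H(\pi,1)\neq0$ iff $\pi\cong\wt{\pi}^\theta$. This criterion is the finite-field analogue of Flicker's theorem, and I expect it to be the \emph{main obstacle}: I would either cite the relevant analysis of $\Ind_{H}^{G}1$ (which is multiplicity free, with the $\theta$-conjugate self-dual cuspidals as its cuspidal constituents) or establish it directly from $\dim\Hom_H(\pi,1)=|H|^{-1}\sum_{h\in H}\chi_\pi(h)$ together with the character identities available for cuspidal representations. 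Finally, the \emph{in particular} clause is immediate: for $g\in H$ one has $\theta(g)=g$, so the displayed identity specializes to $\CB_{\pi,\psi_E}(g^{-1})=\CB_{\pi,\psi_E}(g)$.
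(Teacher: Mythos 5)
Your proposal is correct, and it consumes exactly the same two ingredients as the paper, just packaged differently. The criterion you flag as the \emph{main obstacle} --- $\pi$ distinguished iff $\pi\cong\wt{\pi}^{\theta}$ --- is precisely Theorem 3.6 of \cite{Gow}, which the paper simply cites; Gow states it in character form, $\chi_\pi(\tau(g))=\chi_\pi(g)$ with $\tau(g)={^t}(\theta(g))^{-1}$, and since $\chi_\pi({^t}x)=\chi_\pi(x)$ gives $\chi_\pi(\tau(g))=\chi_\pi(\theta(g)^{-1})=\chi_{\wt{\pi}^{\theta}}(g)$, the two formulations coincide. So a citation suffices here, as in the paper; rederiving it from $\dim\Hom_H(\pi,1)=|H|^{-1}\sum_{h\in H}\chi_\pi(h)$ would be a serious detour, since Gow's theorem is the actual content of the multiplicity-free analysis of the induced representation from $H$. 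Where you diverge from the paper is only in how the Bessel identity is tied to this criterion: you identify $g\mapsto\CB_{\pi,\psi_E}(\theta(g)^{-1})$ as the Bessel function $\CB_{\wt{\pi}^{\theta},\psi_E}$ and then invoke injectivity of $\rho\mapsto\CB_{\rho,\psi_E}$ on irreducible generic representations, whereas the paper runs the substitution $u\mapsto\theta(u)^{-1}$ directly in the character sum for the forward direction and, for the converse, recovers the character from the Bessel function via Proposition 5.4 of \cite{Gel}, namely $\chi_\pi(g)=|N_{n-1}(E)|^{-1}\sum_{p\in P_n(E)}\CB_{\pi,\psi_E}(pgp^{-1})$. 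Note that this same proposition (or Schur orthogonality, since Bessel functions are matrix coefficients that equal $1$ at the identity) is exactly what justifies your injectivity step, so the logical content is identical. What your organization buys is conceptual transparency --- the lemma becomes ``Bessel functions detect $\theta$-twisted self-duality,'' with distinction quarantined inside Gow's theorem; what the paper's buys is self-containedness, never needing ``the Bessel function determines the representation'' as a separately named statement.
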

\begin{proof}
Assume $\pi$ is distinguished, then by Theorem 3.6 in \cite{Gow}, $\chi_\pi(\tau(g))=\chi_\pi(g)$. Then we have
\begin{align*}
\CB_{\pi,\psi_E}(\theta(g)^{-1}) &= \frac{1}{|N_n(E)|} \sum_{u\in N_n(E)} \psi_E(u^{-1})\chi_\pi(\theta(g)^{-1} u ) \\
&= \frac{1}{|N_n(E)|} \sum_{u\in N_n(E)} \psi_E(u^{-1}) \chi_{\pi} ({^t}g \tau(u))  \\
&= \frac{1}{|N_n(E)|} \sum_{u\in N_n(E)} \psi_E(u^{-1}) \chi_\pi(\theta(u)^{-1}g )  \\
&= \frac{1}{|N_n(E)|} \sum_{u\in N_n(E)} \psi_E(u^{-1}) \chi_\pi(g\theta(u)^{-1} )  \\
&= \frac{1}{|N_n(E)|} \sum_{u\in N_n(E)} \psi_E(\theta(u)) \chi_\pi(gu)  \\
&= \frac{1}{|N_n(E)|} \sum_{u\in N_n(E)} \psi_E(u^{-1}) \chi_\pi(gu) \\
&= \CB_{\pi,\psi}(g).
\end{align*}
Conversely, if $\CB_{\pi,\psi_E}(\theta(g)^{-1})=\CB_{\pi,\psi_E}(g)$ for all $g\in G$, by Proposition 5.4 in \cite{Gel}, we have
\begin{align*}
\chi_\pi(\tau(g))&= \chi_\pi(\theta(g)^{-1})=\frac{1}{|N_{n-1}(E)|}\sum_{p\in P_n(E)}\CB_{\pi,\psi_E}(p\theta(g)^{-1}p^{-1})   \\
&= \frac{1}{|N_{n-1}(E)|}\sum_{p\in P_n(E)}\CB_{\pi,\psi_E}(\theta(p)^{-1}g\theta(p) )   \\
&= \frac{1}{|N_{n-1}(E)|}\sum_{p\in P_n(E)}\CB_{\pi,\psi_E}(pgp^{-1})  \\
&= \chi_\pi(g)
\end{align*}
and hence $\pi$ is distinguished by Theorem 3.6 in \cite{Gow} again.
\end{proof}

\begin{proposition}
\label{AbsValue1} Suppose $\pi$ is an irreducible cuspidal distinguished representation of $G$. Then
\[
\gamma(\pi,\psi,As)=-1.
\]
\end{proposition}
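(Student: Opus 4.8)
The plan is to run the functional equation of Theorem~\ref{AsaiGamma} against the test function $\delta_0$, which falls outside its stated range $\varphi(0)=0$, and to exploit that in the distinguished case the $H$-invariant functional
\[
L(W):=Z(W,1;\psi)=\sum_{g\in N_n(F)\backslash H}W(g)
\]
need not vanish. (The sum is well defined on cosets because $\psi_E$ is trivial on $F$, so $W(ug)=W(g)$ for $u\in N_n(F)$.) The identity I am aiming for is $(\gamma(\pi,\psi,As)+1)\,L(W)=0$ for every $W$; granting the nonvanishing of $L$, this forces $\gamma(\pi,\psi,As)=-1$.

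First I would record a corrected functional equation valid for all $\varphi$. Writing $\varphi=\varphi_0+\varphi_1$ with $\varphi_0=\varphi-\varphi(0)$ and $\varphi_1=\varphi(0)$ the constant function, applying Theorem~\ref{AsaiGamma} to $\varphi_0$, and using $\CF_\psi 1=q^n\delta_0$ together with $Z(\wt W,\delta_0;\psi^{-1})=0$ (since $e_ng\neq 0$ for $g\in H$), one obtains
\[
\gamma(\pi,\psi,As)\bigl(Z(W,\varphi;\psi)-\varphi(0)L(W)\bigr)=Z(\wt W,\CF_\psi\varphi;\psi^{-1}).
\]
Specializing to $\varphi=\delta_0$, where $Z(W,\delta_0;\psi)=0$, $\varphi(0)=1$ and $\CF_\psi\delta_0=1$, collapses this to
\[
-\gamma(\pi,\psi,As)\,L(W)=\sum_{g\in N_n(F)\backslash H}\wt W(g).
\]
I then identify the right-hand side with $L(W)$ itself: since $\wt W(g)=W(\omega_n\,{^t}g^{-1})$ and $g\mapsto\omega_n\,{^t}g^{-1}$ is a bijection of $N_n(F)\backslash H$ (conjugation by $\omega_n$ carries the lower-triangular ${^t}u^{-1}$ back into $N_n(F)$, on which $\psi_E$ is trivial), the change of variables gives $\sum_g\wt W(g)=\sum_h W(h)=L(W)$. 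Hence $(\gamma(\pi,\psi,As)+1)L(W)=0$.

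The main obstacle is showing $L\not\equiv 0$ when $\pi$ is distinguished, since a priori $L$ is just one element of $\Hom_H(\pi,1)$ and could vanish. I would argue representation-theoretically. The form $Z(\cdot,\cdot;\psi)$ on $\pi\otimes\CS_0(F^n)$ is nonzero, as $Z(\CB_{\pi,\psi_E},\phi_n;\psi)=1$ by the computation in Proposition~\ref{Bessel} and $\phi_n\in\CS_0(F^n)$; equivalently it yields a nonzero $H$-equivariant map $\beta\colon\CS_0(F^n)\cong\Ind_{P_n(F)}^{H}1\to\wt\pi$, which is surjective because $\wt\pi$ is irreducible. Choosing $0\neq\xi_0\in(\wt\pi)^{H}\cong\Hom_H(\pi,1)$ (nonzero by distinguishedness) and averaging a $\beta$-preimage of $\xi_0$ over the finite group $H$ produces an $H$-fixed element of $\CS_0(F^n)$ mapping to $\xi_0$; but the fixed line of $\CS_0(F^n)$ is spanned by the constant function $1-\delta_0$ on $F^n\setminus\{0\}$, so $\beta(1-\delta_0)\neq 0$. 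Since $\beta(1-\delta_0)(W)=Z(W,1;\psi)-Z(W,\delta_0;\psi)=L(W)$, we get $L\neq 0$ and therefore $\gamma(\pi,\psi,As)=-1$. Lemma~\ref{BesselDistinction}, which records the distinction of $\pi$ directly on the Bessel function, should provide an alternative, more computational route to this same nonvanishing.
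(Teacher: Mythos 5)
Your reduction via the functional equation is correct and is genuinely different from the paper's route: the corrected identity $\gamma(\pi,\psi,As)\bigl(Z(W,\varphi;\psi)-\varphi(0)L(W)\bigr)=Z(\wt W,\CF_\psi\varphi;\psi^{-1})$ is exactly what the paper's Proposition~\ref{GeneralFE} proof yields before the distinction hypothesis is used, and specializing at $\varphi=\delta_0$ together with the coset bijection $g\mapsto \omega_n\,{^t}g^{-1}$ (legitimate, since $\psi_E$ is trivial on $F$, so both $W$ and $\wt W$ descend to $N_n(F)\backslash H$) cleanly gives $(\gamma(\pi,\psi,As)+1)L(W)=0$. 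This bypasses the paper's machinery entirely: the paper instead computes $\gamma$ via its Bessel expression (Proposition~\ref{Bessel}), uses Lemma~\ref{BesselDistinction} to replace $\CB_{\pi,\psi_E}(g^{-1})$ by $\CB_{\pi,\psi_E}(g)$, and finally evaluates the resulting character sum $\sum_{g\in P_n(F)\backslash H}\psi(\langle e_ng,e_1\rangle)=-1$ by citing Lemma A.1 of \cite{SZ}. Your approach needs no such exponential-sum evaluation.

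However, your argument for the crux, $L\not\equiv 0$, has a genuine gap, and it is precisely the point where the paper does its real work. The map $\beta:\CS_0(F^n)\to\wt\pi$ is only $H$-equivariant, and $\wt\pi$ is irreducible as a representation of $G$, not of $H$; its restriction to $H$ decomposes into many irreducibles, so irreducibility buys you nothing. Indeed surjectivity is impossible for $n\geq 3$ by dimension count: $\dim\CS_0(F^n)=q^n-1$ while $\dim\wt\pi=\prod_{i=1}^{n-1}(q^{2i}-1)$. Worse, the remaining argument is circular: the image of $\beta$ is the $H$-submodule of $\wt\pi$ generated by $\beta(\phi_n)=l$, where $l(v)=\sum_{p\in N_n(F)\backslash P_n(F)}W_v(p)$, and this submodule contains the $H$-fixed line if and only if the projector $\frac{1}{|H|}\sum_{h\in H}\wt\pi(h)$ does not kill $l$ --- which unwinds to exactly the statement $L\neq 0$ that you are trying to prove. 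Nor does Lemma~\ref{BesselDistinction} rescue this: the symmetry $\CB_{\pi,\psi_E}(g^{-1})=\CB_{\pi,\psi_E}(g)$ on $H$ gives no nonvanishing of the period by itself. The actual input you need is the paper's key step: by Proposition 4.3 of \cite{AM}, $\dim\Hom_{P_n(F)}(\pi,1)=1$ for cuspidal $\pi$, and by Theorem 3.6 of \cite{Gow}, $\dim\Hom_H(\pi,1)=1$ for distinguished $\pi$, so the inclusion $\Hom_H(\pi,1)\subseteq\Hom_{P_n(F)}(\pi,1)$ is an equality; hence $l$, which is nonzero because $l(\CB_{\pi,\psi_E})=1$ (Proposition 4.9 of \cite{Gel}), is automatically $H$-invariant, giving $L(W)=\sum_{g\in P_n(F)\backslash H}l(\pi(g)W)=(q^n-1)\,l(W)$ and in particular $L(\CB_{\pi,\psi_E})=q^n-1\neq 0$. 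With that substitution your proof closes, and it is then an attractive alternative to the paper's computation; without it, the distinction hypothesis never genuinely enters your argument.
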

\begin{proof}
By Proposition 4.3 in \cite{AM}, since $\pi$ is cuspidal, we have
\[
\Hom_{P_n(F)}(\pi, 1)\cong \BC.
\]
On the other hand, by Theorem 3.6 in \cite{Gow},
\[
\Hom_{H} (\pi, 1) \cong \BC,
\]
So the containment $\Hom_{H} (\pi, 1) \subseteq \Hom_{P_n(F)} (\pi, 1)$ is in fact an equality.

Consider the linear form, for $v\in V_\pi$,
\[
l(v)=\sum_{p\in N_n(F)\backslash P_n(F)} W_v (p).
\]
This is nonzero and $l\in \Hom_{P_n(F)}(\pi, 1)$, which then implies that $l\in \Hom_{H} (\pi, 1)$. This means that $l(\pi(g)v)=l(v)$ for any $v$ and any $g\in H$. Apply this to $\CB_{\pi,\psi_E}$, and by Proposition 4.9 and its corollary in \cite{Gel}, we find
\[
\sum_{p\in N_n(F)\backslash P_n(F)}\CB_{\pi,\psi_E}(pg)=\sum_{p\in N_n(F)\backslash P_n(F)}\CB_{\pi,\psi_E}(p)=1.
\]

Now by Proposition \ref{Bessel} and Lemma \ref{BesselDistinction}, we get
\begin{align*}
\gamma(\pi,\psi,As)&= \sum_{g\in N_n(F)\backslash H} \CB_{\pi,\psi_E}(g)\psi(<e_ng^{-1}, e_1>) \\
&= \frac{1}{|N_n(F)|} \sum_{g\in H} \CB_{\pi,\psi_E}(g)\psi(<e_ng^{-1}, e_1>) \\
&= \frac{1}{|N_n(F)|} \sum_{g\in H} \CB_{\pi,\psi_E}(g^{-1})\psi(<e_ng^{-1}, e_1>)  \\
&= \frac{1}{|N_n(F)|} \sum_{g\in H} \CB_{\pi,\psi_E}(g)\psi(<e_ng, e_1>)  \\
&= \sum_{g\in N_n(F)\backslash H} \CB_{\pi,\psi_E}(g)\psi(<e_ng, e_1>)  \\
&= \sum_{g\in P_n(F)\backslash H} \sum_{p\in N_n(F)\backslash P_n(F)} \CB_{\pi,\psi_E}(pg)\psi(<e_ng, e_1>)   \\
&= \sum_{g\in P_n(F)\backslash H} \psi(<e_ng, e_1>).
\end{align*}
Now the conclusion follows from the computation in the proof of Lemma A.1 in \cite{SZ}.
\end{proof}

In the above proof, we find that, if $\pi$ is cuspidal distinguished, then
\[
\sum_{g\in N_n(F)\backslash H}\CB_{\pi,\psi_E}(g)=\sum_{P_n(F)\backslash H}1=\frac{|H|}{|P_n(F)|}=q^n-1.
\]
This will be used in the next section. We remark that there is a formula in \cite{AM}, see also \cite{Ya}, of such sum for general generic representations.

\section{Level zero representations}

A representation $\sigma$ of $GL_n(L)$ is of level zero if it is of the form
\[
\sigma \cong ind_{L^\times GL_n(\fo_L)}^{GL_n(L)} \Lambda
\]
where $\Lambda$ is a representation of $L^\times GL_n(\fo_L)$ such that $\Lambda |_{GL_n(\fo_L)}$ is an inflation of $\pi$ via $GL_n(\fo_L) \xrightarrow{mod \fp_L} GL_n(E)$. Here $\pi$ is an irreducible cuspidal representation of $GL_n(E)$, and $ind$ is the smooth compact induction.
Note that $L^\times\bigcap GL_n(\fo_L)\cong \fo_L^\times$, so $\Lambda$ is determined by $\pi$ and the nonzero complex number $\lambda=\omega_\Lambda(\varpi)$, where $\omega_\Lambda$ is the central character of $\Lambda$. Note that $\omega_\sigma=\omega_\Lambda$ in this case. By Theorem 8.4.1 in \cite{BK}, we then get a parametrization of level zero representations with the pair $(\lambda, \pi)$. We will just say a level zero representation $\sigma$ is from $(\lambda, \pi)$.

Recall that we have a unramified quadratic extension of p-adic fields $L/K$, with quadratic extension of residue fields $E/F$.
\begin{lemma}
\label{Lfactor} Let $\sigma$ be a level zero representation of $GL_n(L)$ coming from $(\lambda, \pi)$, where $\pi$ is a distinguished representation of $GL_n(E)$ with respect to $GL_n(F)$. Then
\[
L(s, \sigma, As)=(1-\lambda q^{-ns})^{-1}.
\]
\end{lemma}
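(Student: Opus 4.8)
The plan is to produce one explicit pair $(W,\varphi)$ whose integral realizes the pole at $\lambda q^{-ns}=1$, and then to bound from above the fractional ideal spanned by all the integrals $Z(W,\varphi,s)$, so that by Theorem~\ref{LocalAsai} its generator is exactly $(1-\lambda q^{-ns})^{-1}$. The essential input is the explicit Whittaker function of a level zero representation, which I would borrow from \cite{Y}: since $\sigma=\ind_{L^\times GL_n(\fo_L)}^{GL_n(L)}\Lambda$ with $\Lambda|_{GL_n(\fo_L)}$ the inflation of the cuspidal $\pi$, there is a Whittaker function $W_0\in\CW(\sigma,\psi_L)$ vanishing off $N_n(L)\,L^\times\,GL_n(\fo_L)$ and satisfying $W_0(\varpi^m k)=\lambda^m\CB_{\pi,\psi_E}(\bar k)$ for $m\in\BZ$ and $k\in GL_n(\fo_L)$, with $\bar k$ the reduction modulo $\fp_L$. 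Two features will be used: $\psi_L$ is trivial on $N_n(K)$, so the restriction of $W_0$ to $GL_n(K)$ is left $N_n(K)$-invariant and is governed by $\CB_{\pi,\psi_E}|_H$; and the distinction of $\pi$ makes its central character trivial on $F^\times$.

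Next I would compute $Z(W_0,\varphi,s)$ for $\varphi=\mathbf 1_{\fo_K^n}$ via the Iwasawa decomposition $GL_n(K)=N_n(K)\,T\,GL_n(\fo_K)$, $T$ the diagonal torus. Writing $g=ntk$ we have $W_0(g)=W_0(tk)$, and the support of $W_0$ forces $t$ to have all entries of equal valuation, i.e. $t=\varpi^m u$ with $u\in T(\fo_K)$; this is where supercuspidality (compact support modulo the center) enters. On such elements $W_0(tk)=\lambda^m\CB_{\pi,\psi_E}(\overline{uk})$ and $|\det(tk)|_K^s=q^{-nms}$, while $\varphi(e_ntk)=\mathbf 1_{\fo_K^n}(\varpi^m e_n(uk))$ equals $1$ precisely when $m\ge 0$. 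The central exponent $m$ and the compact part $k$ decouple, giving
\[
Z(W_0,\varphi,s)=c\Big(\sum_{m\ge 0}\lambda^m q^{-nms}\Big)\sum_{g\in N_n(F)\backslash H}\CB_{\pi,\psi_E}(g)
\]
for a positive volume constant $c$.

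By the computation at the end of Section~3, the distinction of $\pi$ gives $\sum_{g\in N_n(F)\backslash H}\CB_{\pi,\psi_E}(g)=q^n-1\ne 0$, so $Z(W_0,\varphi,s)=c(q^n-1)(1-\lambda q^{-ns})^{-1}$; this exhibits the pole and places $(1-\lambda q^{-ns})^{-1}$ in the ideal. For the reverse containment, take an arbitrary $W\in\CW(\sigma,\psi_L)$: supercuspidality makes its Kirillov function compactly supported modulo the center, so in the Iwasawa decomposition the differences $v(t_i)-v(t_{i+1})$ range over a finite set and the central exponent is bounded below. Since translation by $\varpi I_n$ multiplies the integrand by $\lambda q^{-ns}$, the only infinite (geometric) direction is the central one and it always produces the denominator $1-\lambda q^{-ns}$; hence every $Z(W,\varphi,s)$ lies in $(1-\lambda q^{-ns})^{-1}\BC[q^s,q^{-s}]$. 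The ideal is therefore exactly $(1-\lambda q^{-ns})^{-1}\BC[q^s,q^{-s}]$, and $L(s,\sigma,As)=(1-\lambda q^{-ns})^{-1}$.

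The step I expect to be the main obstacle is the control of the support of $W_0$ after restriction to $GL_n(K)$, together with the justification that the central direction decouples cleanly from the finite part in the Iwasawa integral. By contrast, the nonvanishing of the finite field sum that creates the pole is already provided by the distinction computation of Section~3 and is precisely the reason the factor $(1-\lambda q^{-ns})^{-1}$, rather than $1$, occurs.
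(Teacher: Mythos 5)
Your proposal is correct in substance, but it takes a genuinely different route from the paper. The paper never touches the zeta integrals: it quotes Matringe's result (Proposition 3.6 of \cite{M}) that the poles of $L(s,\sigma,As)$ are precisely the $q^{s_0}$ for which $\sigma\otimes|\cdot|_L^{s_0/2}$ is distinguished by $GL_n(K)$; it then shows any such pole forces $\lambda q^{-ns_0}=1$ (via Lemma 2.1 of \cite{Y} and triviality of the central character on $K^\times$, using that $L/K$ is unramified), and conversely that every such $s_0$ is a pole, by Mackey theory for the compactly induced $\Lambda$ together with the Gelfand-pair property of $(GL_n(L),GL_n(K))$ --- this is exactly where the hypothesis $\Hom_H(\pi,1)\neq 0$ enters; multiplying over the $n$ resulting poles gives $(1-\lambda q^{-ns})^{-1}$. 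You instead compute the fractional ideal of Theorem \ref{LocalAsai} directly: the Paskunas--Stevens function $W_0$ with $\varphi=\mathbf 1_{\fo_K^n}$ produces $c(q^n-1)(1-\lambda q^{-ns})^{-1}$, with distinction entering analytically through the nonvanishing Bessel sum $\sum_{g\in N_n(F)\backslash H}\CB_{\pi,\psi_E}(g)=q^n-1$ from Section 3, and a support argument bounds the ideal from above. Your route avoids \cite{M} entirely, reuses the explicit Whittaker function that Section 4 needs anyway for Proposition \ref{Equality}, and makes the residue at the pole explicit; the paper's route is shorter given the cited structural results and requires no measure-theoretic bookkeeping.

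Two steps in your argument need to be firmed up, though both are standard. First, ``the Kirillov function is compactly supported modulo the center'' is not quite the fact you need: the theorem about supercuspidal representations gives compact support modulo $N_n(L)$ of Whittaker functions restricted to the mirabolic $P_n(L)$; to control the integral over all of $N_n(K)\backslash GL_n(K)$ you need compact support of $W$ modulo $Z(L)N_n(L)$ on the whole group, which follows from the mirabolic statement via $GL_n(L)=P_n(L)Z(L)GL_n(\fo_L)$ and smoothness, and you must then check that the restriction to $GL_n(K)$ remains compactly supported modulo $Z(K)N_n(K)$ (true because $L/K$ is unramified, so the $K$- and $L$-Iwasawa torus valuations agree). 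Second, translation by $\varpi I_n$ does not literally multiply the integrand by $\lambda q^{-ns}$, since $\varphi(e_n\varpi g)\neq\varphi(e_n g)$ in general; the series in the central exponent $m$ is only \emph{eventually} geometric, using local constancy of $\varphi$ at $0$ (so $\varphi(\varpi^m x)=\varphi(0)$ for $m\gg 0$) and compact support of $\varphi$ (so the terms vanish for $m\ll 0$). With these two points justified, your computation of the ideal, and hence of $L(s,\sigma,As)=(1-\lambda q^{-ns})^{-1}$, is complete.
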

\begin{proof}
By Proposition 3.6 in \cite{M},
\[
L(s, \sigma, As)=\prod (1-q^{s_0-s})^{-1}
\]
where the product is taken over the $q^{s_0}$'s such that $\sigma \otimes |\cdot|_L^{s_0/2}$ is distinguished. Now if $s_0$ is a such pole, by Lemma 2.1 in \cite{Y}, $\sigma \otimes |\cdot|_L^{s_0/2}$ is a level zero representation from $(\lambda q^{-ns_0} ,\pi)$. As this representation is distinguished, the restriction of its central character to $K^\times$ is trivial. Since the extension $L/K$ is unramified, $\varpi \in \fo_K$ and thus $\lambda q^{-ns_0}=1$.

Conversely, if $s_0$ is a complex number satisfying $\lambda q^{-ns_0}=1$. Consider the pair $(1,\pi)$. It determines a level zero representation $\sigma'\cong ind_{L^\times GL_n(\fo_L)}^{GL_n(L)} \Lambda'$. Set $G'=GL_n(L), H'=GL_n(K)$, by standard Mackey's theory as in section 5 of \cite{HK} for example, we have a decomposition
\[
\Hom_{H'}(\sigma', 1)= \bigoplus_{g\in L^\times GL_n(\fo_L)\backslash G'/H' } \Hom_{L^\times GL_n(\fo_L) \cap gH'g^{-1}  }(\Lambda', 1).
\]
Consider the direct summand corresponding to the identity double coset in the right hand side of the above equality, we have $L^\times GL_n(\fo_L) \cap H'=K^\times GL_n(\fo_K)$ and
\[
\Hom_{K^\times GL_n(\fo_K)}(\Lambda', 1) \cong \Hom_H(\pi, 1)\cong \BC.
\]
Since the pair $(G',H')$ is a Gelfand pair, it follows that other direct summands in the above decomposition are all zero and
\[
\Hom_{H'}(\sigma', 1)\cong \Hom_{K^\times GL_n(\fo_K)}(\Lambda', 1)\cong \BC,
\]
which then implies that $\sigma'$ is distinguished, and such $s_0$ is a pole of $L(s,\sigma,As)$.

Hence $s_0$ is a pole of $L(s,\sigma, As)$ if and only if $\lambda q^{-ns_0}=1$. There are $n$ such poles, and if we fix one pole $s_0$, these $n$ poles are
\[
s_0, s_0e^{\frac{2\pi i}{n\ln q}},...,s_0e^{\frac{2(n-1)\pi i}{n\ln q}}.
\]
Therefore,
\[
L(s, \sigma, As)= \prod_{k=0}^{n-1} (1-q^{s_0-s}e^{\frac{2\pi ik}{n} })^{-1}=(1-q^{ns_0-ns})^{-1}=(1-\lambda q^{-ns})^{-1},
\]
which finishes the proof.
\end{proof}

Suppose we are given a level zero representation $\sigma=ind_{L^\times GL_n(\fo_L)}^{GL_n(L)}\Lambda$ of $GL_n(L)$ corresponding to the pair $(\lambda, \pi)$. Denote $\bar{k}$ the image of $k\in GL_n(\fo_L)$ under the natural map $GL_n(\fo_L) \xrightarrow{mod \fp_L} GL_n(E) $. By Theorem 5.8 in \cite{PS}, there exists a Whittaker function $W_\sigma$ of $\sigma$ with respect to $\psi_L$ with the following properties:

(1). $Supp W_{\sigma} \subseteq N_n(L)L^\times GL_n(\fo_L)$ and
\[
W_\sigma(ug)=\psi_L(u)\CJ_\sigma (g)
\]
for any $y\in N_n(L), g\in L^\times GL_n(\fo_L)$,
where for $g=ak \in L^\times GL_n(\fo_L)$, with $a\in L^\times, k\in GL_n(\fo_L)$,
\[
\CJ_\sigma(g)=\omega_\sigma(a) |N_n(E)|^{-1} \sum_{h\in N_n(E)} \psi_E(h^{-1})\chi_\pi (\bar{k}h).
\]

(2). $Supp W_\sigma \bigcap P_n(L)= N_n(L)(H_n^1(L)\cap P_n(L) ) $ and for $u\in N_n(L)$ and $h\in H_n^1(L)\cap P_n(L)$,
\[
W_\sigma(uh)=\psi_L(u).
\]
Here $H_n^1(L)$ is the subgroup of $GL_n(\fo_L)$ consisting of matrices which will reduce to identity modulo $\fp_L$.

Let us recall results in section 4 and 5 in \cite{PS} to say more on the function $\CJ_\sigma$. Define a map $\Psi_L$ on $(N_n(L)\cap GL_n(\fo_L))H_n^1(L)$ by
\[
\Psi_L(uh)=\psi_L(u)
\]
for any $u\in N_n(L)\cap GL_n(\fo_L), h\in H_n^1(L)$. This is a character of the group $(N_n(L)\cap GL_n(\fo_L))H_n^1(L)$ since $H_n^1(L)$ is normal in $GL_n(\fo_L)$. Now by Theorem 4.4 in \cite{PS}, we know that the restriction $\Lambda |_{(P_n(L)\cap GL_n(\fo_L))H_n^1(L) }$ is irreducible and in fact we have
\[
\Lambda |_{\CM } \cong Ind_{\CU}^{\CM} \Psi_L.
\]
where $\CM= (P_n(L)\cap GL_n(\fo_L))H_n^1(L), \CU=(N_n(L)\cap GL_n(\fo_L))H_n^1(L)$.
To get the function $\CJ_\sigma$, take the group $\CN=(N_n(L)\cap GL_n(\fp_L) )H_n^1(L)$ in the kernel of $\Psi_L$, then, for $g\in GL_n(\fo_L)$,
\[
\CJ_\sigma(g)= (\CU:\CN)^{-1} \sum_{h\in \CU/\CN} \Psi_L(h^{-1})\chi_\Lambda (gh)=|N_n(E)|^{-1} \sum_{h\in N_n(E)} \psi_E(h^{-1})\chi_\pi (\bar{g}h).
\]

\begin{lemma}
\label{LHS} Assume $\sigma$ is of level zero, and let $W_\sigma$ be the Whittaker function of $\sigma$ defined as above. Set $\varphi$ to be the indicator function of $e_nH_n^1(K)$. Then, when restricted to $GL_n(K)$, the function
\[
W_\sigma(g)\varphi(e_n g)
\]
is the indicator function on the set $N_n(K)H_n^1(K)$, and
\[
Z(W,\varphi,s)=vol(N_n(K)\backslash N_n(K)H_n^1(K)).
\]
\end{lemma}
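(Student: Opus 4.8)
The plan is to establish the pointwise identity $W_\sigma(g)\varphi(e_ng)=\mathbf{1}_{N_n(K)H_n^1(K)}(g)$ for $g\in GL_n(K)$; the asserted value of $Z(W,\varphi,s)$ then drops out by a one-line integration. First I would record the two constraints cutting down the support of the integrand. Since $\varphi$ is the indicator of $e_nH_n^1(K)$ and the right stabilizer of $e_n$ in $GL_n(K)$ is exactly $P_n(K)$, one has $\varphi(e_ng)\neq 0$ iff $g\in P_n(K)H_n^1(K)$, in which case $\varphi(e_ng)=1$; the second constraint is property (1), $\Supp W_\sigma\subseteq N_n(L)L^\times GL_n(\fo_L)$. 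For the inclusion $N_n(K)H_n^1(K)\subseteq\{W_\sigma\varphi\neq 0\}$ with value $1$, write $g=uh$ with $u\in N_n(K)$ and $h\in H_n^1(K)$: then $e_ng=e_nh\in e_nH_n^1(K)$, and property (1) gives $W_\sigma(uh)=\psi_L(u)\CJ_\sigma(h)$. Here $\psi_L(u)=1$ because $\psi_L$ is trivial on $K$, and $\ov h=I$ (as $\fp_K\subseteq\fp_L$), so $\CJ_\sigma(h)=\CB_{\pi,\psi_E}(I)=1$.

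For the reverse inclusion, suppose $W_\sigma(g)\varphi(e_ng)\neq 0$. Then $g\in P_n(K)H_n^1(K)$, so the bottom row $e_ng$ is $\equiv e_n \bmod\fp_K$; and $g$ lies in the support, so $g=um$ with $u\in N_n(L)$ and $m\in L^\times GL_n(\fo_L)$, whence $W_\sigma(g)=\psi_L(u)\CJ_\sigma(m)$ and $e_ng=e_nm$. Comparing bottom rows forces the central scalar of $m$ to be a unit, so I may take $m\in GL_n(\fo_L)$; using that $\omega_\sigma$ restricts on $\fo_L^\times$ to the inflation of $\omega_\pi$ together with the central equivariance of the Bessel function, this gives $\CJ_\sigma(m)=\CB_{\pi,\psi_E}(\ov m)$ with $e_n\ov m=e_n$, i.e. $\ov m\in P_n(E)$. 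Now the vanishing of $\CB_{\pi,\psi_E}$ on the mirabolic off $N_n(E)$ (Proposition 4.9 and its corollary in \cite{Gel}, applied to the cuspidal $\pi$ over $E$) forces $\ov m\in N_n(E)$. Lifting $\ov m$ to $\nu\in N_n(\fo_L)$ and writing $m=\nu h'$ with $h'\in H_n^1(L)$, I obtain $g=(u\nu)h'\in N_n(L)H_n^1(L)$.

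The remaining, and I expect hardest, step is the descent $GL_n(K)\cap N_n(L)H_n^1(L)=N_n(K)H_n^1(K)$. Given $g=\tilde u h'\in GL_n(K)$ with $\tilde u\in N_n(L)$ and $h'\in H_n^1(L)$, applying $\theta$ and using $\theta(g)=g$ yields $\theta(\tilde u)^{-1}\tilde u=\theta(h')h'^{-1}=:c\in N^1$, where $N^1:=N_n(L)\cap H_n^1(L)$, and $c$ satisfies $\theta(c)c=1$. If I can solve $\theta(n)n^{-1}=c$ with $n\in N^1$, then replacing $(\tilde u,h')$ by $(\tilde u n,\,n^{-1}h')$ places the unipotent factor in $N_n(K)$ and, since $H_n^1(L)\cap GL_n(K)=H_n^1(K)$, the other factor in $H_n^1(K)$, as desired. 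Solvability is precisely the vanishing of the pointed set $H^1(\Gal(L/K),N^1)$; since $L/K$ is unramified, $\fp_L$ has a normal integral basis, so $H^1(\Gal(L/K),\fp_L)=0$, and d\'evissage along the lower central series of the unipotent group $N^1$ upgrades this to the nonabelian statement. This is the point where unramifiedness of $L/K$ is genuinely used.

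Combining the two inclusions gives $W_\sigma(g)\varphi(e_ng)=\mathbf{1}_{N_n(K)H_n^1(K)}(g)$ on $GL_n(K)$, which is the first claim. For the integral, note that every element of $N_n(K)H_n^1(K)$ has determinant of $K$-absolute value $1$, so the factor $|\det g|_K^s$ is identically $1$ on the support; as $N_n(K)\backslash N_n(K)H_n^1(K)$ is compact, the integral converges and equals $\vol(N_n(K)\backslash N_n(K)H_n^1(K))$, independently of $s$.
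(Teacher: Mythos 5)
Your proof is correct, and its skeleton is the same as the paper's: cut down the support using property (1) of $W_\sigma$ and the support of $\varphi(e_n\,\cdot\,)$, show $\CJ_\sigma$ vanishes off the unipotent congruence part, and integrate the resulting indicator function. You diverge in two places, and the comparison is worth recording. First, for the vanishing of $\CJ_\sigma$ the paper simply cites Proposition 5.3 of \cite{PS} ($\CJ_\sigma=\Psi_L$ on $(N_n(L)\cap GL_n(\fo_L))H_n^1(L)$ and $\CJ_\sigma=0$ elsewhere on the mirabolic part), while you rederive this from the formula $\CJ_\sigma(m)=\CB_{\pi,\psi_E}(\bar m)$ together with the vanishing of the Bessel function of a cuspidal representation on $P_n(E)\setminus N_n(E)$ (\cite{Gel}, Proposition 4.9) --- the same fact the paper invokes in Propositions \ref{Bessel} and \ref{AbsValue1} --- so this step is equivalent in content, just self-contained. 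Second, and more substantially, you isolate and actually prove the descent $GL_n(K)\cap N_n(L)H_n^1(L)=N_n(K)H_n^1(K)$, which the paper passes over silently: it asserts that after restricting to $GL_n(K)$ ``both $\psi_L$ and $\Psi_L$ are trivial,'' but an element of $GL_n(K)\cap N_n(L)H_n^1(L)$ a priori has only $L$-rational factors $u\in N_n(L)$, $h\in H_n^1(L)$, and $\psi_L(u)$ need not equal $1$ for such $u$; without the descent one gets neither that the support meets $GL_n(K)$ in exactly $N_n(K)H_n^1(K)$ nor that the value there is $1$. Your cohomological argument for the descent is sound (the cocycle relation $\theta(c)c=1$, vanishing of $H^1(\Gal(L/K),-)$ on the abelian graded pieces, which are induced modules by the integral normal basis theorem for unramified extensions, then d\'evissage); I would only replace the lower central series by the congruence filtration $N^1\cap H_n^k(L)$, whose graded pieces are visibly direct sums of copies of $\fp_L^k/\fp_L^{k+1}\cong E$, so no commutator computations are needed. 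Alternatively, the descent admits a purely elementary proof by induction on $n$: right-multiply $g$ by the matrix of $H_n^1(K)$ with rows $e_1,\dots,e_{n-1},e_ng$ to reduce to the case $e_ng=e_n$, observe that the upper-left $(n-1)\times(n-1)$ block of $g$ then lies in $GL_{n-1}(K)\cap N_{n-1}(L)H_{n-1}^1(L)$, and split off the last column. Either way, your write-up supplies a step that the paper's own proof genuinely needs but does not justify.
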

\begin{proof}
We first note that $Supp W_{\sigma} \subseteq N_n(L)L^\times GL_n(\fo_L)$ and $Supp \varphi(e_n g) = P_n(L)H_n^1(L)$. Thus the support of $W_\sigma(g)\varphi(e_n g)$ is contained in
\[
N_n(L)L^\times GL_n(\fo_L)\bigcap P_n(L)H_n^1(L)= N_n(L)(GL_n(\fo_L)\bigcap P_n(L) )H_n^1(L).
\]
Then, for any $u\in N_n(L), g\in (GL_n(\fo_L)\bigcap P_n(L) )H_n^1(L)$,
\[
W_\sigma(ug)\varphi(e_n ug)=\psi_L(u)\CJ_\sigma(g).
\]
Now by Proposition 5.3 in \cite{PS}, $\CJ_\sigma(g)=\Psi_L(g)$ if $g\in (GL_n(\fo_L)\bigcap N_n(L) )H_n^1(L)$, and $\CJ_\sigma(g)=0$ elsewhere. Now restrict to $GL_n(K)$, both $\psi_L$ and $\Psi_L$ are trivial, so $W_\sigma(g)\varphi(e_n g)$ is the indicator function on $N_n(K)H_n^1(K)$.

Since $H_n^1(K)$ is compact and $N_n(K)$ is unipotent, we get $|\det g|_F=1$ for all $g\in N_n(K)H_n^1(K)$, and $Z(W,\varphi,s)=vol(N_n(K)\backslash N_n(K)H_n^1(K))$.
\end{proof}

We are now ready to prove the following result which relates Asai gamma factors over finite fields and level zero representations. The proof is similar to that of Theorem 4.1 in \cite{Y}, and we only sketch it.

\begin{proposition}
\label{Equality} Let $\sigma$ be a level zero representation of $GL_n(L)$ coming from a pair $(\lambda, \pi)$, and if $\pi$ is not distinguished. Then
\[
\gamma(\pi,\psi,As)=vol(N_n(K)\backslash N_n(K)H_n^1(K))\gamma(s,\sigma,\psi,As).
\]
\end{proposition}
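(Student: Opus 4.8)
The plan is to substitute the explicit level zero data into Kable's local functional equation and read off the two sides. Take $W=W_\sigma$, the Whittaker function recalled above, and let $\varphi$ be the indicator function of $e_nH_n^1(K)$. Theorem \ref{LocalAsai}(2) then gives
\[
\gamma(s,\sigma,\psi,As)\,Z(W_\sigma,\varphi,s)=Z(\widetilde{W_\sigma},\CF_\psi\varphi,1-s).
\]
By Lemma \ref{LHS} the left-hand integral equals the constant $vol(N_n(K)\backslash N_n(K)H_n^1(K))$, so in view of the asserted identity it suffices to prove that the right-hand integral equals $\gamma(\pi,\psi,As)$.

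First I would compute the Fourier transform of $\varphi$. Since $e_nH_n^1(K)=\fp_K^{n-1}\times(1+\fp_K)$, integrating $\psi$ over this set coordinate by coordinate, using that $\psi$ has conductor $\fp_K$, yields $\CF_\psi\varphi(y)=c^n\,\psi(y_n)\,\mathbf{1}_{\fo_K^n}(y)$ with $c=vol(\fp_K)$. This is the exact $p$-adic shadow of the finite field computation $\CF_\psi\phi_n(y)=\psi(y_n)$ used in the proof of Proposition \ref{Bessel}.

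Next I would localize the integral $Z(\widetilde{W_\sigma},\CF_\psi\varphi,1-s)$. Two support constraints cooperate: $\widetilde{W_\sigma}(g)=W_\sigma(\omega_n{}^tg^{-1})$ is supported where $\omega_n{}^tg^{-1}\in N_n(L)L^\times GL_n(\fo_L)$, while $\CF_\psi\varphi(e_ng)$ forces $e_ng\in\fo_K^n$. Together with the non-distinction hypothesis --- which by the central character argument of Lemma \ref{Lfactor} makes $L(s,\sigma,As)=L(1-s,\widetilde\sigma,As)=1$ and so kills the determinant direction --- these confine the integration over $N_n(K)\backslash GL_n(K)$ to a single compact layer built from $GL_n(\fo_K)$, on which $|\det g|_K=1$; in particular the factor $|\det g|_K^{1-s}$ disappears and the integral is independent of $s$, matching the left-hand side. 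On this layer the values of $W_\sigma$ are governed by $\CJ_\sigma$, and comparing the formula $\CJ_\sigma(k)=|N_n(E)|^{-1}\sum_{h\in N_n(E)}\psi_E(h^{-1})\chi_\pi(\bar kh)$ with the explicit expression for the Bessel function gives $\CJ_\sigma(k)=\CB_{\pi,\psi_E}(\bar k)$.

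Reducing modulo $\fp_K$ then turns the compact integral into a finite sum. Writing $m=\omega_n{}^tg^{-1}$ for the argument of $W_\sigma$, one has $g=\omega_n{}^tm^{-1}$ and hence $(e_ng)_n=(m^{-1})_{n1}$, so the character $\psi(y_n)$ coming from $\CF_\psi\varphi$ becomes $\psi(<e_nm^{-1},e_1>)$; using the bi-$N_n$-equivariance of $\CB_{\pi,\psi_E}$ the reduction collapses the integral to the finite sum of Proposition \ref{Bessel}. Thus, up to the volume constant produced by the reduction, which cancels against $c^n$ under the chosen self-dual measure, the right-hand integral equals
\[
\sum_{m\in N_n(F)\backslash H}\CB_{\pi,\psi_E}(m)\,\psi(<e_nm^{-1},e_1>)=\gamma(\pi,\psi,As)
\]
by Proposition \ref{Bessel}. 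Equating the two evaluations gives $\gamma(s,\sigma,\psi,As)\,vol(N_n(K)\backslash N_n(K)H_n^1(K))=\gamma(\pi,\psi,As)$, as claimed. I expect the main obstacle to be the localization in the third step: rigorously pinning down the support of $\widetilde{W_\sigma}$ on $GL_n(K)$, including the determinant direction where the non-distinction hypothesis is essential, and then carefully matching the character $\psi(y_n)$ with the finite field weight $\psi(<e_nm^{-1},e_1>)$ through the transpose-inverse and the reduction modulo $\fp_K$, with all accompanying volume constants cancelling as required.
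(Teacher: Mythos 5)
Your choice of test data ($W_\sigma$ and $\varphi=\mathbf{1}_{e_nH_n^1(K)}$), the use of Lemma \ref{LHS} for the left-hand side, the Fourier transform $\CF_\psi\varphi(y)=\vol(\fp_K)^n\psi(y_n)\mathbf{1}_{\fo_K^n}(y)$, and the identification of the $GL_n(\fo_K)$-layer with the finite Bessel sum of Proposition \ref{Bessel} all coincide with the paper's proof. The genuine gap is in your localization step. After the substitution $g\mapsto \omega_n{}^tg^{-1}$, the support of $W_\sigma$ on $GL_n(K)$ meets \emph{every} coset $\varpi^l N_n(K)GL_n(\fo_K)$, $l\in\BZ$ (indeed $W_\sigma(\varpi^l)=\omega_\sigma(\varpi)^l\neq 0$), and the constraint $e_1{}^tg^{-1}\in\fo_K^n$ imposed by $\CF_\psi\varphi$ only removes the layers with $l>0$: writing $g=\varpi^luk$ with $u\in N_n(K)$, $k\in GL_n(\fo_K)$, the vector $e_1{}^tg^{-1}$ is $\varpi^{-l}$ times the (unimodular, integral) first column of $k^{-1}$, which lies in $\fo_K^n$ for every $l\le 0$. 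No property of $L$-functions can shrink the support of an integrand, so your claim that non-distinction ``confines the integration to a single compact layer'' is false. The layers $l<0$ genuinely contribute: on them $\psi$ is evaluated at elements of $\fp_K$, hence equals $1$, and each layer contributes $q^{-nl(s-1)}\omega_\sigma(\varpi)^l$ times a fixed constant proportional to $\int_{N_n(\fo_K)\backslash GL_n(\fo_K)}\CJ_\sigma(g)\,dg$, i.e.\ to the finite sum $\sum_{g\in N_n(F)\backslash H}\CB_{\pi,\psi_E}(g)$.

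The missing idea --- and the only place where the hypothesis that $\pi$ is not distinguished actually enters the paper's proof --- is that this finite-field Bessel sum vanishes: $v\mapsto\sum_{g\in N_n(F)\backslash H}W_v(g)$ is an $H$-invariant linear functional on $\pi$ (the same observation already used in Proposition \ref{GeneralFE}), hence is identically zero when $\Hom_H(\pi,1)=0$; applied to the Bessel vector it kills the entire geometric series $\sum_{l<0}q^{-nl(s-1)}\omega_\sigma(\varpi)^l$, leaving only the $l=0$ layer, which gives $\gamma(\pi,\psi,As)$ exactly as in your final step. One could instead try to argue that a nonzero Bessel sum would force a pole of the right-hand side at $q^{n(s-1)}=\omega_\sigma(\varpi)$ while the left-hand side is a monomial, but that requires first proving $L(s,\sigma,As)=L(1-s,\wt{\sigma},As)=1$ for non-distinguished $\pi$, i.e.\ redoing the Mackey-theoretic half of Lemma \ref{Lfactor} (which the paper states only for distinguished $\pi$); so this is a detour, not a shortcut. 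As written, your argument never identifies, let alone eliminates, the $l<0$ contributions, and the proof does not close.
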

\begin{proof}
Take $W_\sigma$ to be Paskunas-Stevens'Whittaker function $W_\sigma(g)$, and let $\varphi$ be the indicator function on $e_nH_n^1(K)$. Consider the functional equation defining $\gamma(s,\sigma,\psi,As)$ as in Theorem \ref{LocalAsai}. By Lemma \ref{LHS}, the left hand side of the functional equation is simply
\[
\gamma(s,\sigma,\psi,As)vol(N_n(K)\backslash N_n(K)H_n^1(K)).
\]
Use change of variable $g\to \omega_n {^t}g^{-1}$, we get
\begin{eqnarray*}
RHS&=&\int_{N_n(K)\backslash GL_n(K) } W_\sigma(g)\CF_\psi\varphi(e_1 {^t}g^{-1})|\det g|_K^{s-1}dg   \\
&=& \sum_{l\in \BZ} q^{-nl(s-1)}\omega_\sigma(\varpi)^l \int_{N_n(\fo_K)\backslash GL_n(\fo_K) } \CJ_\sigma (g)\CF_\psi\varphi(\varpi^{-l}e_1 {^t}g^{-1}  )dg
\end{eqnarray*}
since $Supp W_\sigma\bigcap GL_n(K) \subseteq \coprod_{l\in \BZ} \varpi^l N_n(K)GL_n(\fo_K)$. $\CF_\psi\varphi(\varpi^{-l}e_1 {^t}g^{-1}  )$ is calculated in the middle of page 9 in \cite{Y}, plug that formula in and we get
\begin{equation}
\begin{aligned}
RHS&= (\sum_{l<0} q^{-nl(s-1)}\omega_\sigma(\varpi)^l)\int_{N_n(\fo_K)\backslash GL_n(\fo_K)} \CJ_\sigma(g)dg \\
&+ \int_{N_n(\fo_K)\backslash GL_n(\fo_K)} \CJ_\sigma (g)\psi(e_1 {^t}g^{-1}e_n)dg \\
&= (\sum_{l<0} q^{-nl(s-1)}\omega_\sigma(\varpi)^l) \sum_{N_n(F)\backslash GL_n(F)}\CB_{\pi,\psi}(g)+\sum_{N_n(F)\backslash GL_n(F)} \CB_{\pi,\psi}(g)\psi(e_1 {^t}g^{-1}e_n)      \label{Eq.1}
\end{aligned}
\end{equation}
The sum $\sum_{N_n(F)\backslash GL_n(F)}\CB_{\pi,\psi}(g)$ is zero since $\pi$ is not distinguished. Then the result follows from Proposition \ref{Bessel}.
\end{proof}

As an application, following the method in \cite{Y}, we give another way to compute the value of Asai gamma factor over finite fields when the representation is distinguished.

\begin{proposition}
\label{GammaValue} Suppose $\pi$ is an irreducible cuspidal distinguished representation of $G$. Then
\[
\gamma(\pi,\psi,As )=-1,
\]
\end{proposition}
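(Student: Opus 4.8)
The plan is to rerun the computation proving Proposition~\ref{Equality}, but now with $\pi$ distinguished, so that the term that was forced to vanish there instead survives and carries the value of $\gamma(\pi,\psi,As)$. I would fix a level zero representation $\sigma$ of $GL_n(L)$ attached to a pair $(\lambda,\pi)$, insert the Paskunas--Stevens Whittaker function $W_\sigma$ together with the indicator function $\varphi$ of $e_nH_n^1(K)$ into Kable's functional equation (Theorem~\ref{LocalAsai}), and read off both sides. By Lemma~\ref{LHS} the left side equals $vol(N_n(K)\backslash N_n(K)H_n^1(K))\,\gamma(s,\sigma,\psi,As)$, while the right side is given verbatim by \eqref{Eq.1}, whose derivation never used non-distinction. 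The one change is that now $\sum_{N_n(F)\backslash GL_n(F)}\CB_{\pi,\psi_E}(g)=q^n-1$ by the remark following Proposition~\ref{AbsValue1}, rather than $0$, and the remaining sum equals $\gamma(\pi,\psi,As)$ by Proposition~\ref{Bessel}.

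Summing the geometric series $\sum_{l<0}q^{-nl(s-1)}\lambda^{l}$ as a rational function of $q^{-s}$ gives $-(1-\lambda q^{n(1-s)})^{-1}$, so, writing $V=vol(N_n(K)\backslash N_n(K)H_n^1(K))$, the identity reads
\[
V\,\gamma(s,\sigma,\psi,As)=\frac{-(q^n-1)}{1-\lambda q^{n(1-s)}}+\gamma(\pi,\psi,As)=\frac{\bigl(\gamma(\pi,\psi,As)-(q^n-1)\bigr)-\gamma(\pi,\psi,As)\,\lambda q^{n(1-s)}}{1-\lambda q^{n(1-s)}}.
\]
This presents $\gamma(s,\sigma,\psi,As)$ as a ratio of two degree-one polynomials in $q^{-ns}$, hence with a single zero and a single pole.

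To finish I would compare this explicit expression with the shape forced by the Asai $L$- and $\varepsilon$-factors. Since $\pi$ is distinguished, so is $\wt\pi$, and $\wt\sigma$ is level zero from $(\lambda^{-1},\wt\pi)$; thus Lemma~\ref{Lfactor} yields $L(s,\sigma,As)=(1-\lambda q^{-ns})^{-1}$ and $L(1-s,\wt\sigma,As)=(1-\lambda^{-1}q^{n(s-1)})^{-1}$. In Kable's theory $\gamma(s,\sigma,\psi,As)=\varepsilon(s,\sigma,\psi,As)\,L(1-s,\wt\sigma,As)/L(s,\sigma,As)$ with $\varepsilon$ a monomial in $q^{-s}$, so the unique zero of $\gamma$ must occur at the pole $q^{-ns}=\lambda^{-1}$ of $L(s,\sigma,As)$. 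Evaluating the numerator above at $q^{-ns}=\lambda^{-1}$, where $\lambda q^{n(1-s)}=q^{n}$, collapses it to $(1-q^{n})\bigl(\gamma(\pi,\psi,As)+1\bigr)$; since $q^{n}\neq 1$ this forces $\gamma(\pi,\psi,As)=-1$. The main obstacle is precisely this last structural input: it is the fact that $\varepsilon(s,\sigma,\psi,As)$ is a unit, equivalently that the single zero of $\gamma$ sits at the pole of $L(s,\sigma,As)$, that pins the free constant down to $-1$ rather than leaving a one-parameter family consistent with \eqref{Eq.1}. One should also take care to confirm that $L(1-s,\wt\sigma,As)$ is finite and nonzero at $q^{-ns}=\lambda^{-1}$, so that this point is a genuine zero of $\gamma$ and not a cancellation.
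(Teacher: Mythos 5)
Your argument is correct and is essentially the paper's own proof: both rerun the level-zero computation \eqref{Eq.1} with the now-surviving term $\sum_{N_n(F)\backslash H}\CB_{\pi,\psi_E}(g)=q^n-1$, feed in Lemma \ref{Lfactor} for $L(s,\sigma,As)$ and $L(1-s,\wt{\sigma},As)$, and invoke Kable's theorem that $\varepsilon(s,\sigma,\psi,As)$ is a monomial in $q^{\pm s}$. The only divergence is at the very end, where the paper compares coefficients of powers of $q^{s}$ in \eqref{Eq.3} while you evaluate the rational-function identity at the zero $q^{-ns}=\lambda^{-1}$ of $L(s,\sigma,As)^{-1}$ --- a slightly cleaner route to the same conclusion $\gamma(\pi,\psi,As)=-1$.
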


\begin{proof}
Let $\sigma$ be the level zero representation of $GL_n(L)$ for the pair $(\lambda,\pi)$, where $\lambda\in \BC^\times$. Note that $\omega_\sigma(\varpi)=\lambda$, and the contragrdient $\wt{\pi}$ is from $(\lambda^{-1},\wt{\sigma} )$ by Lemma 2.1 in \cite{Y}. From equation \ref{Eq.1}, we get
\[
vol(N_n(K)\backslash N_n(K)H_n^1(K))\gamma(s,\sigma,\psi,As)=\frac{q^{n(s-1)}}{\lambda-q^{n(s-1)}}\sum_{N_n(F)\backslash GL_n(F)} \CB_{\pi,\psi}(g) + \gamma(\pi, \psi,As).
\]

Now from Lemma \ref{Lfactor},
\[
L(s,\sigma, As)=(1-\lambda q^{-ns})^{-1}
\]
and
\[
L(s,\sigma,As)=(1-\lambda^{-1}q^{-ns})^{-1}
\]
Therefore
\begin{equation}
\begin{aligned}
\epsilon(s,\sigma,\psi,As) &=\gamma(s,\sigma,\psi,As)\frac{L(s,\sigma,As)}{L(1-s, \wt{\sigma},As)}  \\
&=\frac{1}{vol(N_n(K)\backslash N_n(K)H_n^1(K))} \frac{\lambda^{-1}q^{-n}[c_1-\gamma(\pi,\psi,As)]q^{ns}+\gamma(\pi,\psi,As) }{q^{ns}-\lambda}\cdot q^{ns}, \label{Eq.2}
\end{aligned}
\end{equation}
where $c_1=\sum_{N_n(F)\backslash GL_n(F)} \CB_{\pi,\psi}(g)=q^n-1$. By Theorem 3 in \cite{K},
\[
\epsilon(s,\sigma,\psi,As)=c_2'q^{c_3s}
\]
where $c_2'\in \BC^\times, c_3\in \BZ$ depending on $\lambda$.

Then equality \ref{Eq.2} becomes
\begin{align}
q^{2ns}q^{-n}(c_1-\gamma(\pi,\psi,As))+\lambda \gamma(\pi,\psi,As) q^{ns}=c_2\lambda q^{(c_3+n)s}-c_2\lambda^2 q^{c_3s}.  \label{Eq.3}
\end{align}
where $c_2=vol(N_n(K)\backslash N_n(K)H_n^1(K))\cdot c_2'$. We note that $c_1,c_2,c_3$ and $\gamma(\pi,\psi,As)$ are independent of $s$. By comparing the powers of $q^s$ and the corresponding coefficients, we get
\begin{align*}
2n &= c_3+n \\
q^{-n}(c_1-\gamma(\pi,\psi,As))&= c_2\lambda \\
\lambda \gamma(\pi,\psi,As) &=-c_2\lambda^2.
\end{align*}
We then find
\begin{align*}
c_3 &= n \\
\gamma(\pi,\psi,As)&= \frac{c_1}{1-q^n}=-1 \\
\gamma(\pi,\psi,As) &=-c_2\lambda.
\end{align*}
Now the conclusion follows.
\end{proof}

\end{document}